\theoremstyle{plain}
\newtheorem{thm}{Theorem}[section]
\newtheorem{lemma}[thm]{Lemma}
\newtheorem{prop}[thm]{Proposition}
\theoremstyle{definition}
\newtheorem{remark}[thm]{Remark}
\newtheorem{example}[thm]{Example}
\newtheorem{Assumption}[thm]{Assumption}
\def\mequal{\mathrel{\mathpalette\@mvereq{\hbox{\sevenrm m}}}}
\def\@mvereq#1#2{\lower.5\p@\vbox{\baselineskip\z@skip\lineskip1.5\p@
    \ialign{$\m@th#1\hfil##\hfil$\crcr#2\crcr=\crcr}}}
\def\partr#1#2{/\kern-.08333em/_{#1,#2}^{\phantom{.}}}
\def\invpartr#1#2{/\kern-.08333em/_{#1,#2}^{-1}}
\def\hpartr#1#2{/\kern-.08333em/_{#1,#2}^{h}}
\def\Epartr#1#2{/\kern-.08333em/_{#1,#2}^{E}}
\def\newdot{{\kern.8pt\cdot\kern.8pt}}
\def\,{\relax\ifmmode\mskip\thinmuskip\else\thinspace\fi}
\def\{{\relax\ifmmode\lbrace\else $\lbrace$\fi}
\def\}{\relax\ifmmode\rbrace\else $\rbrace$\fi}
\font\sevenrm=cmr7
\newcommand\E{\mathbb{E}}
\newcommand\DD{\mathbb{D}}
\newcommand\EE{\mathbb{E}}
\newcommand\RR{\mathbb{R}}
\newcommand\PP{\mathbb{P}}
\newcommand{\SB}{{\mathscr B}}
\newcommand{\SF}{{\mathscr F}}
\newcommand{\CQFD}{\hfill $\square$}
\def\mathpal#1{\mathop{\mathchoice{\text{\rm #1}}%
   {\text{\rm #1}}{\text{\rm #1}}%
   {\text{\rm #1}}}\nolimits}
\def\cotanh{\mathpal{cotanh}}
\def\grad{\mathpal{grad}}
\def\id{\mathpal{id}}
\def\grad{\mathop{\rm grad}\nolimits}
\def\di{\displaystyle}
\def\f{\frac}
\def\a{\alpha }
\def\b{\beta }
\def\d{\delta }
\def\e{\varepsilon }
\def\G{\Gamma }
\def\g{\gamma }
\def\l{\lambda }
\def\n{\nabla }
\def\s{\sigma }
\begin{document}

\title[]{Stochastic algorithms for computing means of probability measures}

\author[M. Arnaudon]{Marc Arnaudon} \address{Laboratoire de Math\'ematiques et
  Applications\hfill\break\indent CNRS: UMR 6086\hfill\break\indent
  Universit\'e de Poitiers, T\'el\'eport 2 - BP 30179\hfill\break\indent
  F--86962 Futuroscope Chasseneuil Cedex, France}
\email{marc.arnaudon@math.univ-poitiers.fr}

\author[C. Dombry]{Cl\'ement Dombry} \address{Laboratoire de Math\'ematiques et
  Applications\hfill\break\indent CNRS: UMR 6086\hfill\break\indent
  Universit\'e de Poitiers, T\'el\'eport 2 - BP 30179\hfill\break\indent
  F--86962 Futuroscope Chasseneuil Cedex, France}
\email{clement.dombry@math.univ-poitiers.fr}

\author[A. Phan]{Anthony Phan} \address{Laboratoire de Math\'ematiques et
  Applications\hfill\break\indent CNRS: UMR 6086\hfill\break\indent
  Universit\'e de Poitiers, T\'el\'eport 2 - BP 30179\hfill\break\indent
  F--86962 Futuroscope Chasseneuil Cedex, France}
\email{anthony.phan@math.univ-poitiers.fr}

\author[L. Yang]{Le Yang} \address{Laboratoire de Math\'ematiques et
  Applications\hfill\break\indent CNRS: UMR 6086\hfill\break\indent
  Universit\'e de Poitiers, T\'el\'eport 2 - BP 30179\hfill\break\indent
  F--86962 Futuroscope Chasseneuil Cedex, France}
\email{le.yang@math.univ-poitiers.fr}


%
%

\begin{abstract}\noindent
Consider a probability measure $\mu$ supported by a regular geodesic ball in a manifold. For any $p\ge 1$ we define a stochastic algorithm which converges almost surely to the $p$-mean $e_p$ of  $\mu$. Assuming furthermore that the functional to minimize is regular around $e_p$, we prove that a natural renormalization of the inhomogeneous Markov chain converges in law into an inhomogeneous diffusion process. We give an explicit expression of this process, as well as its local characteristic.
\end{abstract}

\maketitle
\tableofcontents

%
%


\section{Introduction}\label{Section1}
The geometric barycenter of a set of points is the point which minimizes the sum of the distances at the power~$2$ to these points. It is the most common estimator is statistics, however it is sensitive to outliers, and it is natural to replace power~$2$ by $p$ for some $p\in [1,2)$, which leads to the definition of $p$-mean. When $p=1$, the minimizer is the median of the set of points, very often used in robust statistics. In many applications, $p$-means with some  $p\in (1,2)$ give the best compromise.

The Fermat-Weber problem concerns finding the median $e_1$ of a set of points in an Euclidean space.  Numerous authors worked out algorithms for computing~$e_1$. The first algorithm was proposed by Weiszfeld in~\cite{Weiszfeld:37}. It has been extended to sufficiently small domains in Riemannian manifolds with nonnegative curvature by Fletcher and al in~\cite{Fletcher:09}. A complete generalization to manifolds with positive or negative curvature, including existence and uniqueness results (under some convexity conditions in positive curvature), has been given by one of the authors in~\cite{Yang:10}. 

The Riemannian barycenter or Karcher mean of a set of points in a manifold or more generally of a probability measure has been extensively studied, see e.g. \cite{Karcher:77}, \cite{Kendall:90}, \cite{Kendall:91}, \cite{Emery-Mokobodzki:91}, \cite{Picard:94}, \cite{Arnaudon-Li:05}, where questions of existence, uniqueness, stability, relation with martingales in manifolds, behaviour  when measures are pushed by stochastic flows have been considered. The Riemannian barycenter corresponds to $p=2$ in the above description. Computation of Riemannian barycenters by gradient descent has been performed by Le in \cite{Le:04}.

 In \cite{Afsari:10} Afsari proved existence and uniqueness of $p$-means, $p\ge 1$ on geodesic balls with radius $\di r<\f12\min\left\{{\rm inj}(M),\f{\pi}{2\a}\right\}$ if $p\in [1,2)$, and $\di r<\f12\min\left\{{\rm inj}(M),\f{\pi}{\a}\right\}$ if $p\ge 2$. Here ${\rm inj}(M)$ is the injectivity radius of $M$ and $\a>0$ is such that the sectional curvatures in $M$ are bounded above by $\a^2$. The point is that in the case $p\ge 2$, the functional to minimize is not convex any more, which makes the situation much more difficult to handle.

In this paper, under the assumptions of~\cite{Afsari:10} we provide in Theorem~\ref{2.P1} stochastic algorithms which converge almost surely to $p$-means in manifolds, which are easier to implement than gradient descent algorithm since computing the gradient of the function to minimize is not needed. The idea is at each step to go in the direction of a point of the support of $\mu$. The point is chosen at random according to $\mu$ and the size of the step is a well chosen function of the distance to the point, $p$ and the number of the step. For general convergence results on recursive stochastic algorithms, see  \cite{Ljung:77} Theorem~1. However they do not cover the manifold case and nonlinearity of geodesics.  Here we give a  proof using martingale convergence theorem, and the main point consists in determining and estimating all the geometric quantities, checking that under our curvature conditions all the convergence assumptions are fulfilled, since our processes live in manifolds. See also~\cite{Benveniste-Goursat-Ruget:80} for convergence in probability of recursive algorithms.

  The speed of convergence is studied, and in theorem~\ref{T1} we prove that the renormalized inhomogeneous Markov chain of Theorem~\ref{2.P1} converges in law to an inhomogeneous diffusion process. This is an invariance principle type result, see e.g. \cite{KhasMinskii:66}, \cite{McLeish:74}, \cite{Berger:97}, \cite{Gouezel:10} for related works. Here again the main point is to obtain the characteristics of the limiting process from the curvature conditions, the conditions on the support of the mesure and estimates on Jacobi fields. Moreover we consider convergence in law for the Skorohod topology, and the limit depends in a crucial way on the decreasing steps of the algorithms.

\section{Results}
\subsection{$p$-means in regular geodesic balls}\label{Section2}
\setcounter{equation}0

Let $M$ be a Riemannian manifold with pinched sectional curvatures. Let $\a,\b>0$ such that $\a^2$ is a positive upper bound for sectional curvatures on $M$, and $-\b^2$ is a negative lower bound for sectional curvatures on $M$.
Denote by $\rho$ the Riemannian distance on $M$.

In $M$ consider a geodesic ball $B(a,r)$ with $a\in M$. 
Let $\mu$ be a probability measure with support included in a compact convex subset $K_\mu$ of $B(a,r)$. Fix $p\in[1,\infty)$.
We will always make the following assumptions on $(r,p,\mu)$:
\begin{Assumption}
\label{A1}
The support of $\mu$ is not reduced to one point.
Either $p>1$ or the support of $\mu$ is
 not contained in a line, and the radius $r$ satisfies
\begin{equation}
 \label{rp}
r<r_{\a,p}\quad \hbox{with}\left\{\begin{array}{ccc}
                              r_{\a,p}&=\f12\min\left\{{\rm inj}(M),\f{\pi}{2\a}\right\} &\hbox{if}\  p\in [1,2)\\
r_{\a,p}&=\f12\min\left\{{\rm inj}(M),\f{\pi}{\a}\right\} &\hbox{if} \ p\in [2,\infty)
                             \end{array}
\right.
\end{equation}
 \end{Assumption}

Note that $B(a,r)$ is convex if $r<\f12\min\left\{{\rm inj}(M),\f{\pi}{\a}\right\}$.

Under assumption~\ref{A1}, it has been proved in \cite{Afsari:10} (Theorem~2.1) that the function
\begin{equation}
\label{2.1}
\begin{split}
H_p : M&\to \RR_+\\
x&\mapsto \int_M\rho^p(x,y)\mu(dy)
\end{split}
\end{equation}
has a unique minimizer $e_p$ in $M$, the $p$-mean of $\mu$, and moreover $e_p\in B(a,r)$.  If $p=1$, $e_1$ is the median of $\mu$.

It is easily checked that if $p\in [1,2)$, then $H_p$ is strictly convex on $B(a,r)$. On the other hand, if $p\ge 2$ then $H_p$ is of class $C^2$ on $B(a,r)$.

\begin{prop}
\label{2.C1}
Let $K$ be a convex subset of $B(a,r)$ containing the support of $\mu$. Then there exists $C_{p,\mu,K}>0$ such that for all $x\in K$,
\begin{equation}
\label{2.H3}
H_p(x)-H_p(e_p)\ge \f{C_{p,\mu,K}}{2}\rho(x,e_p)^2.
\end{equation}
Moreover if $p\ge 2$ then we can choose $C_{p,\mu,K}$ so that  for all $x\in K$,
\begin{equation}
\label{2.H4}
\|\grad_x H_p \|^2\ge C_{p,\mu,K}\left(H_p(x)-H_p(e_p)\right).
\end{equation}
\end{prop}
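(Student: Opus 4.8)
The plan is to establish the quadratic lower bound \eqref{2.H3} by a compactness argument combined with a Taylor expansion of $H_p$ at $e_p$, treating the cases $p\in[1,2)$ and $p\ge 2$ separately; then to derive \eqref{2.H4} from \eqref{2.H3} together with a matching quadratic upper bound on $H_p(x)-H_p(e_p)$ and a lower bound on $\|\grad_x H_p\|$ that is linear in $\rho(x,e_p)$.

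For \eqref{2.H3}, first observe that $K$ is compact (closed and bounded in a regular geodesic ball), so it suffices to show that $\liminf$ of the ratio $(H_p(x)-H_p(e_p))/\rho(x,e_p)^2$ is positive as $x$ ranges over $K\setminus\{e_p\}$; away from any neighbourhood of $e_p$ the ratio is bounded below by continuity and positivity of $H_p-H_p(e_p)$, so the issue is purely local near $e_p$. When $p\in[1,2)$, $H_p$ is strictly convex on $B(a,r)$, and one uses uniform convexity: for $x$ close to $e_p$ one writes the integrand $\rho^p(\cdot,y)$ along the minimizing geodesic from $e_p$ to $x$ and bounds its second derivative from below using the Hessian comparison theorem (the upper curvature bound $\a^2$ and the radius condition $r<r_{\a,p}$ guarantee a strictly positive lower bound on $\mathrm{Hess}\,\rho^p(\cdot,y)$ away from $y$, while the mass $\mu$ puts near $y$ is controlled because the support is not a single point). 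Integrating against $\mu(dy)$ and using that $\grad_{e_p}H_p=0$ yields the bound. When $p\ge 2$, $H_p$ is $C^2$ on $B(a,r)$, so one expands $H_p(x)=H_p(e_p)+\tfrac12\,\mathrm{Hess}_{e_p}H_p(v,v)+o(\rho(x,e_p)^2)$ with $v$ the initial velocity of the geodesic to $x$; the point $e_p$ being the \emph{unique} minimizer forces $\mathrm{Hess}_{e_p}H_p$ to be positive semidefinite, and one must argue it is in fact positive definite — this is where the strict radius condition $r<\tfrac12\min\{\mathrm{inj}(M),\pi/\a\}$ enters, via Jacobi field estimates ensuring the negative contribution to $\mathrm{Hess}\,\rho^p$ coming from curvature cannot cancel the positive one on the support.

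For \eqref{2.H4} in the case $p\ge 2$, I would combine three ingredients. Since $H_p$ is $C^2$ on the compact convex set $K$, its Hessian is bounded above there, so a Taylor expansion with the fact that $\grad_{e_p}H_p=0$ gives a reverse inequality $H_p(x)-H_p(e_p)\le C'\rho(x,e_p)^2$ for all $x\in K$. Next, one needs $\|\grad_x H_p\|\ge c\,\rho(x,e_p)$ on $K$: near $e_p$ this follows from the nondegeneracy of $\mathrm{Hess}_{e_p}H_p$ established above (so $\grad H_p$ vanishes to exactly first order at its unique zero $e_p$), and away from $e_p$ it follows by compactness since $\grad_x H_p\ne 0$ there (as $e_p$ is the unique critical point — here one uses that $H_p$ has no other critical points in $K$, which is part of Afsari's analysis). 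Chaining these, $\|\grad_x H_p\|^2\ge c^2\rho(x,e_p)^2\ge (c^2/C')(H_p(x)-H_p(e_p))$, and one sets $C_{p,\mu,K}$ to be the smaller of the constant from \eqref{2.H3} and $c^2/C'$.

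The main obstacle is the local analysis at $e_p$: proving that $\mathrm{Hess}_{e_p}H_p$ (or the uniform-convexity constant for $p<2$) is strictly positive. This requires the quantitative Jacobi field / Hessian comparison estimates for $\rho^p(\cdot,y)$ under the two-sided curvature pinching, carefully tracking how the radius condition $r<r_{\a,p}$ prevents the curvature-induced concavity from overwhelming the convexity — the same delicate point that underlies Afsari's uniqueness theorem, and the reason the threshold for $p\ge 2$ is $\pi/\a$ rather than $\pi/(2\a)$. Once that positivity is in hand, everything else is compactness and Taylor expansion.
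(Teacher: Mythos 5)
Your plan for $p\ge 2$ is essentially the paper's: strict positivity of $\nabla dH_p(e_p)$ together with the fact that $e_p$ is the only zero of $\grad H_p$ and of $H_p-H_p(e_p)$, then a compactness argument on $K\setminus B(e_p,\e)$; your chaining of $\|\grad_x H_p\|\ge c\,\rho(x,e_p)$ with the reverse bound $H_p(x)-H_p(e_p)\le C'\rho(x,e_p)^2$ is a perfectly good way to get \eqref{2.H4}. The one caveat is that the strict positivity of the Hessian at $e_p$, which you yourself single out as the main obstacle, is left unproved; the paper does not reprove it either, but imports it (together with the uniqueness of the critical point) from the proof of Afsari's Theorem~2.1. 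So your $p\ge 2$ argument is complete only modulo that citation or an actual Jacobi-field proof of the nondegeneracy.

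For $p<2$ there are two genuine issues with your sketch. First, the case $p=1$: the comparison estimate for $\mathrm{Hess}\,\rho^p(\cdot,y)$ has the factor $p-1$ in the radial direction, which vanishes at $p=1$, so there is no ``strictly positive lower bound on $\mathrm{Hess}\,\rho(\cdot,y)$ away from $y$'', and controlling the mass $\mu$ puts near $y$ does not rescue this: if the support were contained in a single geodesic, $H_1$ would fail to be strictly convex altogether. What is needed is the stronger hypothesis of Assumption~\ref{A1} that the support is not contained in a line, in a quantitative form; the paper sidesteps this by invoking Yang's Theorem~3.7 for $p=1$, and your argument as written does not cover it. Second, for $p\in(1,2)$ your localization near $e_p$ plus compactness is more complicated than necessary and loses information: since $p-2<0$ one has $\rho(\g_u(s),y)^{p-2}\ge (2r)^{p-2}$ uniformly, including arbitrarily close to $y$, so the second-derivative bound $h_y''(s)\ge p(2r)^{p-2}\min\bigl(p-1,2\a r\cot(2\a r)\bigr)$ holds for every $y$ in the support, and integrating it along the geodesic from $e_p$ to $x$ (using that the first-order term vanishes at the minimizer) yields \eqref{2.H3} on all of $K$ directly, with the explicit constant $C_{p,\mu,K}=p(2r)^{p-2}\min\bigl(p-1,2\a r\cot(2\a r)\bigr)$. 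This explicitness is not cosmetic: the paper uses it later (remark following Theorem~\ref{T1}) to make the condition $\d>C_{p,\mu,K}^{-1}$ checkable, whereas a compactness argument only produces an unspecified constant.
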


In the sequel, we fix
\begin{equation}
\label{K}
K=\bar B(a, r-\e)\quad \hbox{with}\quad \e= \f{\rho(K_\mu,B(a,r)^c)}{2}.
\end{equation}

We now state our main result: we define a stochastic gradient algorithm $(X_k)_{k\geq 0}$ to approximate the $p$-mean $e_p$ and prove its convergence.
\begin{thm}
\label{2.P1}
Let $(P_k)_{k\ge 1}$ be a sequence of independent $B(a,r)$-valued random variables, with law $\mu$. Let $(t_k)_{k\ge 1}$ be a sequence of positive numbers satisfying
\begin{equation}
\label{2.2bis} \forall k\ge 1,\ \
t_k\leq\min\left(\f1{C_{p,\mu,K}},
\f{\rho(K_\mu,B(a,r)^c)}{2p(2r)^{p-1}}\right),
\end{equation}
\begin{equation}
\label{2.2}
\sum_{k=1}^\infty t_k=+\infty\quad\hbox{and}\quad \sum_{k=1}^\infty t_k^2<\infty.
\end{equation}
Letting $x_0\in K$, define inductively the random walk $(X_k)_{k\ge 0}$ by
\begin{equation}
\label{2.3}
X_0=x_0\quad\hbox{and for $k\ge 0$}\quad X_{k+1}=\exp_{X_{k}}\left(-t_{k+1}\grad_{X_{k}} F_p(\cdot, P_{k+1})\right)
\end{equation}
where $F_p(x,y)=\rho^p(x,y)$, with the convention $\grad_x F_p(\cdot, x)=0$.

The random walk $(X_k)_{k\ge 1}$ converges in $L^2$ and almost surely to $e_p$.
\end{thm}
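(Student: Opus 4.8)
The plan is to use a stochastic approximation argument based on the martingale convergence theorem, treating $(X_k)$ as a noisy gradient descent for $H_p$. Set $\mathcal F_k = \sigma(P_1,\dots,P_k)$ and observe that, by the convention $\grad_x F_p(\cdot,x)=0$ and the defining recursion~\eqref{2.3}, the increment $-t_{k+1}\grad_{X_k}F_p(\cdot,P_{k+1})$ has conditional expectation $-t_{k+1}\grad_{X_k}H_p$ given $\mathcal F_k$; the correction term $M_{k+1}:=\grad_{X_k}F_p(\cdot,P_{k+1})-\grad_{X_k}H_p$ is a bounded martingale increment. The first preliminary step is to check that the algorithm stays in $K=\bar B(a,r-\e)$: since $\|\grad_xF_p(\cdot,y)\|=p\rho^{p-1}(x,y)\le p(2r)^{p-1}$ on $B(a,r)$, the step-size bound~\eqref{2.2bis} guarantees $\rho(X_k,X_{k+1})\le t_{k+1}p(2r)^{p-1}\le \rho(K_\mu,B(a,r)^c)/2=\e$, so that one never leaves $K$ (starting from $x_0\in K$ and using that the support of $\mu$, hence the "pull", lies in $K_\mu$); here convexity of $B(a,r)$ is used so that $\exp$ behaves well.

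Next I would introduce the Lyapunov function $V_k := H_p(X_k)-H_p(e_p)\ge 0$ and estimate $\E[V_{k+1}\mid \mathcal F_k]$. This is the heart of the argument. Expanding $H_p$ along the geodesic from $X_k$ with initial velocity the random step, one gets a first-order term $\langle \grad_{X_k}H_p, -t_{k+1}\grad_{X_k}F_p(\cdot,P_{k+1})\rangle$ whose conditional expectation is $-t_{k+1}\|\grad_{X_k}H_p\|^2$, plus a second-order remainder controlled by $C t_{k+1}^2$ using a uniform bound on the Hessian of $H_p$ on $K$ (available since $H_p$ is $C^2$ there when $p\ge2$, and convex/Lipschitz-gradient when $p\in[1,2)$) together with the bound on $\|\grad F_p\|^2$. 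Thus
\[
\E[V_{k+1}\mid\mathcal F_k]\le V_k - t_{k+1}\|\grad_{X_k}H_p\|^2 + C t_{k+1}^2.
\]
Now comes the case distinction. For $p\ge2$, inequality~\eqref{2.H4} gives $\|\grad_{X_k}H_p\|^2\ge C_{p,\mu,K}V_k$, so $\E[V_{k+1}\mid\mathcal F_k]\le (1-C_{p,\mu,K}t_{k+1})V_k + Ct_{k+1}^2$; for $p\in[1,2)$ strict convexity plus~\eqref{2.H3} yields a comparable contraction (one may even bound $\|\grad_{X_k}H_p\|^2$ below by a multiple of $V_k$ directly, or argue via $\langle\grad H_p,\exp_{X_k}^{-1}e_p\rangle\ge C V_k$). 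In all cases we land on a recursion of Robbins–Monro type.

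From the recursion $\E[V_{k+1}\mid\mathcal F_k]\le (1-a_{k+1})V_k + b_{k+1}$ with $a_k = C_{p,\mu,K}t_k\in[0,1]$ (legitimate since $t_k\le 1/C_{p,\mu,K}$ by~\eqref{2.2bis}), $\sum a_k=\infty$, $\sum b_k<\infty$, I would conclude as follows: the process $W_k := V_k + \sum_{j>k} b_j$ is a nonnegative supermartingale, hence converges a.s. and in $L^1$, so $V_k$ converges a.s. to some limit $V_\infty\ge0$; taking expectations in the recursion and using $\sum a_k=\infty$ forces $\E[V_\infty]=0$, whence $V_\infty=0$ a.s. Then~\eqref{2.H3} gives $\rho(X_k,e_p)^2\le \tfrac{2}{C_{p,\mu,K}}V_k\to0$ a.s., and the $L^2$ convergence follows from $\E[\rho(X_k,e_p)^2]\le \tfrac{2}{C_{p,\mu,K}}\E[V_k]\to0$. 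For this last step one needs $\E[V_k]\to0$, which I would obtain by iterating the expectation form of the recursion (a standard lemma: $u_{k+1}\le(1-a_{k+1})u_k+b_{k+1}$ with the summability/divergence conditions implies $u_k\to0$). The main obstacle is the Lyapunov step: getting the second-order Taylor remainder for $H_p$ along geodesics under only pinched curvature — this requires Jacobi field estimates and, for $p\ge2$ where $H_p$ is merely $C^2$ and not convex, genuine use of~\eqref{2.H4}; the constant $C$ in the $t_{k+1}^2$ term must be shown uniform over $K$, which is where the curvature bounds $\alpha,\beta$ and the radius restriction~\eqref{rp} enter.
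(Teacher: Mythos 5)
Your outline works for $p\ge 2$, and there it coincides with the paper's own argument: the Lyapunov function is $V_k=H_p(X_k)-H_p(e_p)$, the descent inequality comes from a Taylor expansion of $t\mapsto H_p(\gamma(t))$ along the geodesic with a uniform second-derivative bound, \eqref{2.H4} supplies the contraction $\|\grad_{X_k}H_p\|^2\ge C_{p,\mu,K}V_k$, and the conclusion follows from the nonnegative supermartingale plus iterating the expectation recursion, then passing to $\rho^2(X_k,e_p)$ via \eqref{2.H3}. The confinement to $K$ is also essentially the paper's argument, although you should state the two cases explicitly: either $X_{k+1}$ lies on the geodesic segment from $X_k$ to $P_{k+1}$ and is in $K$ by convexity, or it overshoots $P_{k+1}$ by at most $\rho(K_\mu,B(a,r)^c)/2$ and is therefore still in $K$; the bare bound $\rho(X_k,X_{k+1})\le\e$ alone does not keep the walk in $K$.

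For $p\in[1,2)$, however, your Lyapunov step has a genuine gap. You invoke ``a uniform bound on the Hessian of $H_p$ on $K$'' (or a Lipschitz gradient), but no such bound exists in general in this range: along a unit-speed geodesic the second derivative of $x\mapsto\rho^p(x,y)$ is of order $\rho^{p-2}(x,y)$, which blows up as the geodesic approaches $y$ when $p<2$; if $\mu$ has atoms (the generic case in applications) the Hessian of $H_p$ is unbounded near the atoms, and for $p=1$ even $\grad H_1$ fails to be continuous there. Since the geodesic from $X_k$ toward $P_{k+1}$ can pass arbitrarily close to points of ${\rm supp}(\mu)$, the second-order remainder cannot be bounded by $Ct_{k+1}^2$ with a constant uniform over $K$; note also that \eqref{2.H4} is only proved for $p\ge 2$, so your proposed contraction in terms of $\|\grad H_p\|^2$ is not directly available either. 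The paper avoids all of this by switching Lyapunov functions when $p\in[1,2)$: it expands $t\mapsto\f12\rho^2(e_p,\gamma(t))$, whose second derivative along any geodesic in the ball is bounded by a constant $C(\b,r,p)$ depending only on the lower curvature bound and the radius, handles the first-order term through the convexity inequality \eqref{2.4} for $F_p(\cdot,y)$ (which requires no differentiability of $H_p$), and only afterwards uses \eqref{2.H3} to replace $H_p(X_k)-H_p(e_p)$ by $\f{C_{p,\mu,K}}2\rho^2(X_k,e_p)$ in the expectation recursion. You would need some such device (or extra regularity assumptions on $\mu$) to close the case $p\in[1,2)$.
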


In the following example, we focus on the case $M=\RR^d$ and $p=2$ where drastic simplifications occur.
\begin{example}
\label{E1}
In the case when $M=\RR^d$ and $\mu$ is a compactly supported probability measure on $\RR^d$, the stochastic gradient algorithm \eqref{2.3} simplifies into
\[
X_0=x_0\quad\hbox{and for $k\ge 0$}\quad X_{k+1}=X_k-t_{k+1}\grad_{X_{k}} F_p(\cdot, P_{k+1}).
\]
If furthermore $p=2$, clearly $e_2=\EE[P_1]$ and $\grad_x F_p(\cdot, y)=2(x-y)$, so that the linear relation
\[
X_{k+1}=(1-2t_{k+1})X_k+2t_{k+1}P_{k+1},\quad k\geq 0
\]
holds true and an easy induction proves that
\begin{equation} \label{2.12}
X_k=x_0\prod_{j=0}^{k-1}(1-2t_{k-j})+2\sum_{j=0}^{k-1}P_{k-j}t_{k-j}\prod_{\ell=0}^{j-1}(1-2t_{k-\ell}),\quad k\ge 1.
\end{equation}
Now, taking $\di t_k=\f1{2k}$, we have
$$
\prod_{j=0}^{k-1}(1-2t_{k-j})=0\quad \hbox{and} \quad \prod_{\ell=0}^{j-1}(1-2t_{k-\ell})=\f{k-j}{k}
$$
so that
$$
X_k=\sum_{j=0}^{k-1}P_{k-j}\f1k=\f1k\sum_{j=1}^kP_j.
$$
The stochastic gradient algorithm estimating the mean $e_2$ of $\mu$ is given by the empirical mean of a growing sample of independent random variables with distribution $\mu$. In this simple case, the result of Theorem~\ref{2.P1} is nothing but the strong law of large numbers. Moreover, fluctuations around the mean are given by the central limit theorem and Donsker's theorem.
\end{example}

\subsection{Fluctuations of the stochastic gradient algorithm}\label{Section3}

The notations are the same as in the beginning of section~\ref{Section2}. We still make assumption~\ref{A1}.
Let us define $K$ and $\e$ as in \eqref{K} and let
\begin{equation}
 \label{3.0}
\d_1=\min\left(\f1{C_{p,\mu,K}},
\f{\rho(K_\mu,B(a,r)^c)}{2p(2r)^{p-1}}\right).
\end{equation}

We consider the time inhomogeneous $M$-valued Markov chain \eqref{2.3} in the particular case when
\begin{equation}\label{3.01}
t_k=\min\left(\frac{\delta}{k}, \delta_1\right), \quad k\geq 1
\end{equation}
for some $\delta>0$. The particular sequence $(t_k)_{k\geq 1}$ defined by \eqref{3.01} satisfies  \eqref{2.2bis} and \eqref{2.2}, so Theorem \ref{2.P1} holds true and the stochastic gradient algorithm $(X_k)_{k\ge 0}$ converges a.s. and in $L^2$ to the $p$-mean $e_p$.


In order to study the fluctuations around the $p$-mean $e_p$, we define for $n\ge 1$ the rescaled $T_{e_p}M$-valued Markov chain $(Y_k^n)_{k\ge 0}$ by
\begin{equation}
 \label{3.4}
Y_k^n=\f{k}{\sqrt{n}}\exp_{e_p}^{-1}X_k.
\end{equation}

We will prove convergence of the sequence of process $(Y_{[nt]}^n)_{t\ge 0}$ to a non-homogeneous diffusion process. The limit process is defined in the following proposition:

\begin{prop}\label{pr2}
Assume that $H_p$ is $C^2$ in a neighborhood of~$e_p$, and that  $\d>C_{p,\mu,K}^{-1}$. Define
\[
\Gamma=\EE\left[\grad_{e_p}F_p(\cdot,P_1)\otimes \grad_{e_p}F_p(\cdot, P_1)\right]
\]
and $G_\d(t)$ the generator
\begin{equation} \label{3.26bis}
 G_\d(t) f(y):= \langle d_yf,t^{-1}(y-\d\n dH_p(y,\cdot)^\sharp)\rangle+\f{\d^2}2{\rm Hess}_yf\left(\Gamma\right)
\end{equation}
where $\n dH_p(y,\cdot)^\sharp$ denotes the dual vector of the linear form $\n dH_p(y,\cdot)$.

There exists a unique inhomogeneous diffusion process $\di (y_\d(t))_{t> 0}$ on $T_{e_p}M$ with generator $G_\d(t)$ and converging in probability to $0$ as $t\to 0^+$.

The process $y_\d$ is continuous, converges a.s. to $0$ as $t\to 0^+$ and has the following integral representation:
\begin{equation}
\label{3.32}
 y_\d(t)=\sum_{i=1}^dt^{1-\d\l_i}\int_0^ts^{\d\l_i-1}\langle \d\s \,dB_s,e_i\rangle e_i,\quad t\ge
 0,
\end{equation}
where $B_t$ is a standard Brownian motion on $T_{e_p}M$,  $\s\in {\rm End}(T_{e_p}M)$ satisfies $\s\s^\ast=~\G$,
$(e_i)_{1\le i\le d}$ is an orthonormal basis diagonalizing the symmetric bilinear form $\n dH_p(e_p)$ and $(\l_i)_{1\le i\le d}$ are the associated eigenvalues.
\end{prop}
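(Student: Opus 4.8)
The idea is to diagonalize and reduce to $d$ decoupled one--dimensional linear stochastic differential equations. Work in the orthonormal basis $(e_i)_{1\le i\le d}$ diagonalizing the symmetric bilinear form $\n dH_p(e_p)$, with eigenvalues $(\l_i)_{1\le i\le d}$, and write $y=\sum_iy^ie_i$, so that $\n dH_p(y,\cdot)^\sharp=\sum_i\l_iy^ie_i$. Since $e_p$ is a critical point of $H_p$ and $H_p$ is $C^2$ near $e_p$, a second order Taylor expansion of $H_p\circ\exp_{e_p}$ at $0$ together with \eqref{2.H3} of Proposition~\ref{2.C1} (legitimate because $e_p$ lies in the interior of $K$) gives $\n dH_p(e_p)(v,v)\ge C_{p,\mu,K}|v|^2$ for every $v$, hence $\l_i\ge C_{p,\mu,K}$ for all $i$; with the hypothesis $\d>C_{p,\mu,K}^{-1}$ this yields the crucial inequality $\d\l_i>1$ for all $i$. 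By inspection of \eqref{3.26bis}, $G_\d(t)$ is, in this basis, the generator of the linear SDE
\begin{equation*}
dy^i(t)=t^{-1}(1-\d\l_i)\,y^i(t)\,dt+\d\langle\s\,dB_t,e_i\rangle,\qquad 1\le i\le d,
\end{equation*}
where $\s\in{\rm End}(T_{e_p}M)$ satisfies $\s\s^\ast=\G$.

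Next I would build the solution by variation of constants. The fundamental solution of the homogeneous equation $\dot z=t^{-1}(1-\d\l_i)z$ is $t\mapsto t^{1-\d\l_i}$, so the solution ``issued from $0$ at time $0$'' is, coordinatewise,
\[
y^i(t)=t^{1-\d\l_i}\int_0^ts^{\d\l_i-1}\,\d\langle\s\,dB_s,e_i\rangle,
\]
which is exactly \eqref{3.32}. One must check that this improper It\^o integral is well defined: by the It\^o isometry its variance at time $t$ equals $\d^2\langle\G e_i,e_i\rangle\int_0^ts^{2\d\l_i-2}\,ds=\f{\d^2\langle\G e_i,e_i\rangle}{2\d\l_i-1}t^{2\d\l_i-1}$, which is finite precisely because $\d\l_i>1>\f12$; hence $y^i$ is a centered Gaussian process with ${\rm Var}\,y^i(t)=\f{\d^2\langle\G e_i,e_i\rangle}{2\d\l_i-1}t\to0$, so $y_\d(t)\to0$ in $L^2$, in particular in probability. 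Writing $\int_0^ts^{\d\l_i-1}\,\d\langle\s\,dB_s,e_i\rangle$ as a time--changed Brownian motion $W_{\tau_i(t)}$ with $\tau_i(t)$ of order $t^{2\d\l_i-1}\to0$, the law of the iterated logarithm gives $|y^i(t)|=O(t^{1/2}(\log\log(1/t))^{1/2})$ almost surely, which yields both the almost sure convergence to $0$ and the continuity of $y_\d$ on $[0,\infty)$. Finally, It\^o's formula applied to $f(y_\d(t))$ together with $\s\s^\ast=\G$ shows that $f(y_\d(t))-\int_{t_0}^tG_\d(s)f(y_\d(s))\,ds$ is a local martingale for every smooth $f$, so $y_\d$ is an inhomogeneous diffusion with generator $G_\d(t)$.

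For uniqueness, let $(\tilde y(t))_{t>0}$ be any continuous inhomogeneous diffusion with generator $G_\d(t)$ converging to $0$ in probability as $t\to0^+$. On each interval $[t_0,t_1]\subset(0,\infty)$ the coefficients of the linear SDE are smooth, so the martingale problem is well posed there: conditionally on $\tilde y(t_0)$, the vector $\tilde y(t_1)$ is Gaussian with mean $\Phi(t_1,t_0)\tilde y(t_0)$ and covariance $\int_{t_0}^{t_1}\Phi(t_1,s)\,\d^2\G\,\Phi(t_1,s)^\ast\,ds$, where $\Phi(t,s)e_i=(t/s)^{1-\d\l_i}e_i$. Letting $t_0\to0^+$, the operator norm of $\Phi(t_1,t_0)$, namely $\max_it_1^{1-\d\l_i}t_0^{\d\l_i-1}$, tends to $0$ because $\d\l_i>1$, while $\tilde y(t_0)$ stays bounded in probability; hence $\Phi(t_1,t_0)\tilde y(t_0)\to0$ in probability and the covariance tends to $\int_0^{t_1}\Phi(t_1,s)\,\d^2\G\,\Phi(t_1,s)^\ast\,ds$, so $\tilde y(t_1)$ has the same law as $y_\d(t_1)$. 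Iterating this argument with the Markov property identifies all finite--dimensional distributions of $\tilde y$ with those of $y_\d$, and since both processes are continuous their laws on path space coincide.

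The step I expect to cost the most effort is the control of the singular time $t=0$: justifying convergence of the improper stochastic integral and, for uniqueness, showing that the entrance law at $0$ is pinned down by the requirement of convergence to $0$ in probability. This is exactly where the assumptions $H_p\in C^2$ near $e_p$ and $\d>C_{p,\mu,K}^{-1}$ are used, through $\d\l_i>1$: without this inequality the integrating--factor construction breaks down (the integral may diverge at $0$) and the entrance law need not be unique.
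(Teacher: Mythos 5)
Your proposal is correct and follows essentially the same route as the paper: diagonalize in the eigenbasis of $\n dH_p(e_p)$, solve the resulting decoupled linear SDEs by variation of constants, use $\d\l_i>1$ (from $\l_i\ge C_{p,\mu,K}$ and $\d>C_{p,\mu,K}^{-1}$) both to make the improper integral from $0$ converge and to kill the initial-condition term as $t_0\to 0^+$ for uniqueness, and invoke a time change plus the law of the iterated logarithm for a.s. convergence to $0$. The only differences are presentational (you justify $\l_i\ge C_{p,\mu,K}$ via \eqref{2.H3} and phrase uniqueness through conditional Gaussian laws rather than the paper's pathwise formula on $[\e,\infty)$), not a different method.
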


Note that the integral representation \eqref{3.32} implies that $y_\d$ is the centered Gaussian process with covariance
\begin{equation}\label{eq:cov}
\EE\left[y^i_\d(t_1)y^j_\d(t_2)\right]=\frac{\d^2 \G (e_i^\ast\otimes e_j^\ast)} {\d(\lambda_i+\lambda_j)-1}{t_1^{1-\d\l_i}t_2^{1-\d\l_j}} {(t_1\wedge t_2)^{\delta(\l_i+\l_j)-1}},
\end{equation}

where $y_\d^i(t)=\langle y_\d(t),e_i\rangle$, $1\leq i,j\leq d$ and $t_1,t_2\geq 0$.

Our main result on the fluctuations of the stochastic gradient algorithm is the following:
\begin{thm}  \label{T1}
Assume that either $e_p$ does not belong to the support of $\mu$ or $p\ge 2$. Assume furthermore that  $\d>C_{p,\mu,K}^{-1}$.
The sequence of processes $\di \left(Y_{[nt]}^n\right)_{t\ge 0}$ weakly converges in $\DD((0,\infty),T_{e_p}M)$ to $y_\d$.
\end{thm}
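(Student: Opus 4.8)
The plan is to work in the normal chart at $e_p$, to derive for the rescaled chain a recursion of stochastic--approximation type, to split each increment into a predictable drift, a martingale increment and negligible remainders, to recognize the resulting triangular array as the Euler scheme of the diffusion with generator $G_\d(t)$, and finally to invoke the uniqueness part of Proposition~\ref{pr2} to identify the limit. Set $V_k=\exp_{e_p}^{-1}X_k\in T_{e_p}M$ and $\SF_k=\s(P_1,\dots,P_k)$; this chart is legitimate because $2r<{\rm inj}(M)$, so $B(a,r)$ lies in the injectivity domain of $e_p$, and $\|V_k\|=\rho(X_k,e_p)$. By Theorem~\ref{2.P1} (applicable since \eqref{3.01} satisfies \eqref{2.2bis}--\eqref{2.2}) the chain stays in $K$ and $V_k\to0$ a.s.\ and in $L^2$; moreover under the present hypotheses $H_p$ is $C^2$ on a neighborhood $U$ of $e_p$ (recalled in Section~\ref{Section2} when $p\ge2$; when $1\le p<2$ and $e_p$ is outside the support of $\mu$, each $\rho^p(\cdot,y)$ is smooth near $e_p$, hence so is $H_p$). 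I would first establish the rate $\EE[\rho^2(X_k,e_p)]=O(1/k)$ via a recursive inequality
\[
\EE[\rho^2(X_{k+1},e_p)\mid\SF_k]\le(1-C_{p,\mu,K}t_{k+1})\rho^2(X_k,e_p)+c\,t_{k+1}^2 ,
\]
obtained from \eqref{2.H3}--\eqref{2.H4}, from $\EE[\grad_{X_k}F_p(\cdot,P_{k+1})\mid\SF_k]=\grad_{X_k}H_p$, from the uniform bound $\|\grad_xF_p(\cdot,y)\|\le p(2r)^{p-1}$, and from the Rauch comparison estimate $\rho^2(\exp_x v,e_p)\le\rho^2(x,e_p)-2\langle v,\exp_x^{-1}e_p\rangle+c'\|v\|^2$ on $K$; since $t_k=\d/k$ eventually and $\d C_{p,\mu,K}>1$, this gives $\sup_k k\,\EE[\rho^2(X_k,e_p)]<\infty$, hence $\EE[\|Y_k^n\|^2]=\f{k^2}n\EE[\rho^2(X_k,e_p)]\le C_T$ for all $k\le nT$. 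This is exactly where the hypothesis $\d>C_{p,\mu,K}^{-1}$ enters.

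Writing $u_{k+1}=-\grad_{X_k}F_p(\cdot,P_{k+1})$, so $X_{k+1}=\exp_{X_k}(t_{k+1}u_{k+1})$, I would Taylor--expand the map $w\mapsto\exp_{e_p}^{-1}(\exp_{X_k}w)$ to second order by Jacobi field estimates, uniform over $K$ thanks to the curvature bounds $\pm\a^2,-\b^2$:
\[
V_{k+1}=V_k+t_{k+1}\,\Psi_{X_k}(u_{k+1})+O\!\bigl(t_{k+1}^2\|u_{k+1}\|^2\bigr),
\]
with $\Psi_x\colon T_xM\to T_{e_p}M$ a linear isomorphism depending smoothly on $x\in K$ and $\Psi_{e_p}={\rm id}$. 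Taking conditional expectations, using $\EE[u_{k+1}\mid\SF_k]=-\grad_{X_k}H_p$ near $e_p$, the $C^2$ regularity on $U$ and $\grad_{e_p}H_p=0$ (so that, identifying $\n dH_p(e_p)$ with the chart Hessian at the critical point, $\Psi_{X_k}(-\grad_{X_k}H_p)=-\n dH_p(e_p)V_k+o(\|V_k\|)$), and setting $\Delta N_{k+1}:=\Psi_{X_k}(u_{k+1})-\EE[\Psi_{X_k}(u_{k+1})\mid\SF_k]$, I obtain
\[
V_{k+1}-V_k=-t_{k+1}\,\n dH_p(e_p)V_k+t_{k+1}\,\Delta N_{k+1}+R_{k+1},
\]
where $(\Delta N_{k+1})$ is a martingale increment with $\EE[\Delta N_{k+1}\otimes\Delta N_{k+1}\mid\SF_k]=\G_{X_k}\to\G$ (continuity of $x\mapsto\G_x$, $X_k\to e_p$, $\|\grad F_p\|$ bounded) and $\EE[\|R_{k+1}\|\mid\SF_k]=t_{k+1}o(\|V_k\|)+O(t_{k+1}^2)$.

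Rescaling, for $k$ large $t_{k+1}=\d/(k+1)$ and $Y_{k+1}^n-Y_k^n=\f1kY_k^n+\f{k+1}{\sqrt n}(V_{k+1}-V_k)$, hence
\[
Y_{k+1}^n-Y_k^n=\f1k\bigl(I-\d\,\n dH_p(e_p)\bigr)Y_k^n+\f{\d}{\sqrt n}\,\Delta N_{k+1}+\widetilde R_{k+1},\qquad \sum_{k\le nT}\EE\|\widetilde R_{k+1}\|=o(1),
\]
the last bound coming from the rate above. This is the Euler scheme of $G_\d(t)$: summing the drifts over $k\in\{[ns],\dots,[nt]-1\}$ yields, since $\sum 1/k\to\log(t/s)$, a Riemann sum converging to $\int_s^t\tau^{-1}(I-\d\,\n dH_p(e_p))y(\tau)\,d\tau$; the predictable quadratic variation of $\sum\f{\d}{\sqrt n}\Delta N_{k+1}$ over the same block converges to $\d^2(t-s)\G=\d^2(t-s)\s\s^\ast$; and the Lindeberg condition holds because $\f{\d}{\sqrt n}\Delta N_{k+1}=O(n^{-1/2})$ uniformly. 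By a standard functional limit theorem for inhomogeneous Markov chains / triangular arrays I would then deduce that $(Y^n_{[n\cdot]})$ is tight in $\DD((0,\infty),T_{e_p}M)$ with continuous limit points, and that along any subsequence $Y^{n_j}\Rightarrow y$ one has, for all $0<a\le t$ and $f\in C^\infty_c(T_{e_p}M)$, that $f(y(t))-f(y(a))-\int_a^tG_\d(s)f(y(s))\,ds$ is a martingale, i.e.\ $y$ solves the martingale problem for $G_\d$ on $(0,\infty)$. From $\EE\|Y^n_{[na]}\|^2\le Ca$ and weak convergence (Fatou applied to $\|\cdot\|^2\wedge M$, then $M\to\infty$) it follows that $\EE\|y(a)\|^2\le Ca$, so $y(a)\to0$ in probability as $a\to0^+$; by the uniqueness in Proposition~\ref{pr2}, $y$ has the law of $y_\d$. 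Since this holds for every subsequential limit, $(Y^n_{[nt]})_{t\ge0}$ converges weakly to $y_\d$ in $\DD((0,\infty),T_{e_p}M)$.

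The hard part will be the geometry underlying the one--step expansion $V_{k+1}=V_k+t_{k+1}\Psi_{X_k}(u_{k+1})+O(t_{k+1}^2\|u_{k+1}\|^2)$: one must control $\Psi_x$ and the remainder \emph{uniformly} over $K$, and must check that the curvature corrections it hides are annihilated by the $k/\sqrt n$ rescaling and do not pollute either the drift or the quadratic variation. This is where the bounds $\pm\a^2,-\b^2$ and the radius restriction of Assumption~\ref{A1} are genuinely used, through Jacobi field / Rauch comparison. The second delicate point is the sharp $L^2$ rate $\EE[\rho^2(X_k,e_p)]=O(1/k)$, which both forces the hypothesis $\d>C_{p,\mu,K}^{-1}$ and is responsible for the finiteness of the limiting covariance \eqref{eq:cov}.
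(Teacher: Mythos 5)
Your proposal is correct and follows essentially the same route as the paper: the sharp $L^2$ rate $\sup_k k\,\EE[\rho^2(X_k,e_p)]<\infty$ under $\d>C_{p,\mu,K}^{-1}$ (the paper's Lemma~\ref{L3.3}), a uniform one-step Taylor expansion via Jacobi fields producing the drift $t^{-1}(y-\d\n dH_p(y,\cdot)^\sharp)$ and covariance $\d^2\G$ (Lemma~\ref{L3.2} and Lemma~\ref{L3.1}), a chain-to-diffusion limit theorem on $[\e,T]$, and identification of the limit through the bound $\EE\|Y^n_{[nt]}\|^2\le Ct$ together with the uniqueness statement of Proposition~\ref{pr2}. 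The only difference is packaging: the paper verifies the truncated drift/covariance conditions for the space--time chain $(k/n,Y^n_k)$ and applies Stroock--Varadhan Theorem~11.2.3 with a diagonal extraction over $\e=1/m$, whereas you phrase the same step as a triangular-array/martingale-problem argument, which is equivalent once the localization on $[\e,T]\times B(0_e,R)$ you leave implicit is spelled out.
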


\begin{remark}
The assumption on $e_p$ implies that $H_p$ is of class $C^2$ in a neighbourhood of $e_p$.
  In the case $p>1$, in the ``generic'' situation for applications, $\mu$ is a discrete measure and $e_p$ does not belong to its support. For $p=1$ one has to be more careful since if $\mu$ is equidistributed in a random set of points, then with positive probability $e_1$ belongs to the support of $\mu$.
\end{remark}

\begin{remark}
From section~\ref{Section2} we know that, when $p\in(1,2]$, the constant
$$\di C_{p,\mu,K}=p(2r)^{p-2}\left(\min\left(p-1,2\a r\cot\left(2\a r\right)\right)\right)$$
is explicit. The constraint $\d>C_{p,\mu,K}^{-1}$ can easily be checked in this case.
\end{remark}

\begin{remark}
In the case $M=\RR^d$, $Y_k^n=\frac{k}{\sqrt n}(X_k-e_p)$ and the
tangent space $T_{e_p}M$ is identified to $\RR^d$. Theorem \ref{T1}
holds and, in particular, when $t=1$, we obtain a central limit
Theorem: $\sqrt n(X_n-e_p)$ converges as $n\to\infty$ to a centered
Gaussian $d$-variate distribution (with covariance structure given
by \eqref{eq:cov} with $t_1=t_2=1$). This is a central limit
theorem: the fluctuations of the stochastic gradient algorithm are
of scale $n^{-1/2}$ and asymptotically Gaussian.
\end{remark}

\section{Proofs}
For simplicity, let us write shortly $e=e_p$ in the proofs.
\setcounter{equation}0
\subsection{Proof of Proposition  \ref{2.C1}}\ \\
For $p=1$ this is a direct consequence of~\cite{Yang:10} Theorem~3.7. 

Next we consider the case $p\in(1,2)$.

Let  $K\subset B(a,r)$ be a compact convex set containing the support of $\mu$. Let  $x\in K\backslash\{e\}$, $t=\rho(e,x)$,  $u\in T_eM$ the unit vector such that $\exp_e(\rho(e,x)u)=x$, and
$\g_{u}$  the geodesic with initial speed $u$ : $\dot\g_u(0)=u$. For $y\in K$, letting $h_y(s)=\rho(\g_u(s),y)^p$, $s\in [0,t]$, we have since $p>1$
$$
h_y(t)=h_y(0)+th_y'(0)+\int_0^t(t-s)h_y''(s)\,ds
$$
with the convention $h_y''(s)=0$ when $\g_u(s)=y$. Indeed, if $y\not\in \g([0,t])$ then $h_y$ is smooth, and if $y\in \g([0,t])$, say $y=\g(s_0)$ then $h_y(s)=|s-s_0|^p$ and the formula can easily be checked.

By standard calculation,
\begin{equation}
\label{2.E2bis}
\begin{split}
&h_y''(s)\\&\ge p\rho(\g_u(s),y)^{p-2}\\&\quad \times\left((p-1)\|\dot\g_u(s)^{T(y)}\|^2+\|\dot\g_u(s)^{N(y)}\|^2 \a\rho(\g_u(s),y)\cot\left(\a \rho(\g_u(s),y)\right)\right)
\end{split}
\end{equation}
with $\di \dot\g_u(s)^{T(y)}$ (resp. $\di \dot\g_u(s)^{N(y)}$) the tangential (resp. the normal) part of $\dot\g_u(s)$ with respect to $\di n(\g_u(s),y)=\f1{\rho(\g_u(s),y)}\exp_{\g_u(s)}^{-1}(y)$:
$$
\dot\g_u(s)^{T(y)}=\langle \dot\g_u(s), n(\g_u(s),y)\rangle n(\g_u(s),y),\quad  \dot\g_u(s)^{N(y)}=\dot\g_u(s)-\dot\g_u(s)^{T(y)}.
$$
From this we get
\begin{equation}
\label{2.E2ter}
\begin{split}
h_y''(s)\ge p\rho(\g_u(s),y)^{p-2}\left(\min\left(p-1,2\a r\cot\left(2\a r\right)\right)\right)
\end{split}.
\end{equation}
Now
\begin{align*}
&H_p(\g_u(t'))\\&=\int_{B(a,r)}h_y(\g_u(t'))\,\mu(dy)\\&=\int_{B(a,r)}h_y(0)\,\mu(dy)+t'\int_{B(a,r)}h_y'(0)\,\mu(dy)+\int_0^{t'}(t'-s)\left(\int_{B(a,r)}h_y(s)''\,\mu(dy)\right)\,ds
\end{align*}
and $H_p(\g_u(t'))$ attains its minimum at $t'=0$, so $\di \int_{B(a,r)}h_y'(0)\,\mu(dy)=0$ and  we have
$$
H_p(x)=H_p(\g_u(t))=H_p(e)+\int_0^t(t-s)\left(\int_{B(a,r)}h_y(s)''\,\mu(dy)\right)\,ds.
$$
Using Equation~\eqref{2.E2ter} we get
\begin{equation}
 \label{2.H1bis}
\begin{split}
&H_p(x)\ge H_p(e)\\&+\int_0^t\left((t-s)\int_{B(a,r)}p\rho(\g_u(s),y)^{p-2}\left(\min\left(p-1,2\a r\cot\left(2\a r\right)\right)\right)\,\mu(dy)\right)\,ds.
\end{split}
\end{equation}
Since   $p\le 2$  we have $\rho(\g_u(s),y)^{p-2}\ge (2r)^{p-2}$ and
\begin{equation}
 \label{2.H1ter}
\begin{split}
H_p(x)&\ge H_p(e)+\f{t^2}{2}p(2r)^{p-2}\left(\min\left(p-1,2\a r\cot\left(2\a r\right)\right)\right).
\end{split}
\end{equation}
So letting  $$C_{p,\mu,K}=p(2r)^{p-2}\left(\min\left(p-1,2\a r\cot\left(2\a r\right)\right)\right)$$ we obtain
\begin{equation}
\label{2.H1}
H_p(x)\ge H_p(e)+\f{C_{p,\mu,K}\rho(e,x)^2}{2}.
\end{equation}

To finish let us consider the case $p\ge 2$. 

In the proof of~\cite{Afsari:10} Theorem~2.1, it is shown that $e$ is the only zero of the maps $x\mapsto \grad_x H_p$ and $x\mapsto H_p(x)-H_p(e)$, and that $\n d H_p(e)$ is strictly positive. This implies that~\eqref{2.H3} and~\eqref{2.H4} hold on some neighbourhood $B(e,\e)$ of $e$. By compactness and the fact that $H_p-H_p(e)$ and $\grad H_p$ do not vanish on $K\backslash B(e,\e)$ and $H_p-H_p(e)$ is bounded, possibly modifying the constant $C_{p,\mu,K}$,~\eqref{2.H3} and~\eqref{2.H4} also holds on $K\backslash B(e,\e)$.

\CQFD

\subsection{Proof of Theorem \ref{2.P1}}\ \\
Note that, for $x\not=y$,
\[
\grad_x F(\cdot, y)=p\rho^{p-1}(x,y)\f{-\exp_x^{-1}(y)}{\rho(x,y)}=-p\rho^{p-1}(x,y)n(x,y),
\]
whith $\di n(x,y):=\f{\exp_x^{-1}(y)}{\rho(x,y)}$ a unit vector. So,
with the condition~\eqref{2.2bis} on $t_k$, the random walk
$(X_k)_{k\ge 0}$ cannot exit $K$: if $X_k\in K$ then there are two
possibilities for $X_{k+1}$:
\begin{itemize}
\item
 either $X_{k+1}$ is in the geodesic between $X_k$ and $P_{k+1}$ and belongs to $K$ by convexity of $K$;
\item or $X_{k+1}$ is after $P_{k+1}$, but since
\begin{align*}
\|t_{k+1}\grad_{X_{k}} F_p(\cdot, P_{k+1})\|&=t_{k+1}p\rho^{p-1}(X_k,P_{k+1})\\&\le \f{\rho(K_\mu,B(a,r)^c)}{2p(2r)^{p-1}}p\rho^{p-1}(X_k,P_{k+1})\\&\le \f{\rho(K_\mu,B(a,r)^c)}{2},
\end{align*}
we have in this case $$\rho(P_{k+1}, X_{k+1})\le \f{\rho(K_\mu,B(a,r)^c)}{2}$$ which implies that $X_{k+1}\in K.$
\end{itemize}

First consider the case $p\in [1,2)$.

For $k\ge 0$ let
$$
t\mapsto E(t):=\f12 \rho^2\left(e, \g(t)\right),
$$
$\di \g(t)_{ t\in [0, t_{k+1}]}$ the  geodesic satisfying $\di \dot
\g(0)=-\grad_{X_k}F_p(\cdot,P_{k+1})$. We have for all $t\in
[0,t_{k+1}]$
\begin{equation}
\label{2.E} E''(t)\le C(\b,r,p):=p^2(2r)^{2p-1}\b\cotanh (2\b r)
\end{equation}
(see e.g. \cite{Yang:10}).
By Taylor formula,
\begin{align*}
&\rho(X_{k+1},e)^2\\
&=2E(t_{k+1})\\
&=2E(0)+2t_{k+1}E'(0)+t_{k+1}^2E''(t)\quad \hbox{for some $t\in[0,t_{k+1}]$}\\
&\le \rho(X_{k},e)^2+2t_{k+1}\langle \grad_{X_{k}} F_p(\cdot, P_{k+1}), \exp_{X_k}^{-1}(e)\rangle +t_{k+1}^2C(\b,r,p).\\
\end{align*}
Now from the convexity of $x\mapsto F_p(x,y)$ we have for all $x,y\in B(a,r)$
\begin{equation}
\label{2.4}
F_p(e,y)-F_p(x,y)\ge \left\langle \grad_xF_p(\cdot ,y), \exp_x^{-1}(e)\right\rangle.
\end{equation}
This applied with $x=X_k$, $y=P_{k+1}$ yields
\begin{equation}
\label{2.5} \rho(X_{k+1},e)^2\le
\rho(X_{k},e)^2-2t_{k+1}\left(F_p(X_k,P_{k+1})-F_p(e,P_{k+1})\right)
+C(\b,r,p)t_{k+1}^2.
\end{equation}
Letting for $k\ge 0$ $\SF_k=\s(X_\ell, 0\le \ell\le k)$, we get
\begin{align*}
&\EE\left[\rho(X_{k+1},e)^2|\SF_k\right]\\&\le \rho(X_{k},e)^2-2t_{k+1}\int_{B(a,r)}\left(F_p(X_k,y)-F_p(e,y)\right)\mu(dy)+C(\b,r,p)t_{k+1}^2\\
&=\rho(X_{k},e)^2-2t_{k+1}\left(H_p(X_k)-H_p(e)\right)+C(\b,r,p)t_{k+1}^2\\
&\le \rho(X_{k},e)^2+C(\b,r,p)t_{k+1}^2
\end{align*}
so that the process $(Y_k)_{k\ge 0}$ defined by
\begin{equation}
\label{2.6} Y_0=\rho(X_0,e)^2\quad \hbox{and for $k\ge 1$}\quad
Y_{k}=\rho(X_{k},e)^2-C(\b,r,p)\sum_{j=1}^kt_j^2
\end{equation}
is a bounded  supermartingale. So it converges in $L^1$ and almost surely. Consequently $\rho(X_{k},e)^2$
also converges in $L^1$ and almost surely.

Let
\begin{equation}
\label{2.7}
a=\lim_{k\to\infty}\EE\left[\rho(X_{k},e)^2\right].
\end{equation}
We want to prove that $a=0$. We already proved that
\begin{equation}
\label{2.8} \EE\left[\rho(X_{k+1},e)^2|\SF_k\right]\le
\rho(X_{k},e)^2-2t_{k+1}\left(H_p(X_k)-H_p(e)\right)+C(\b,r,p)t_{k+1}^2.
\end{equation}
Taking the expectation and using Proposition~\ref{2.C1}, we obtain
\begin{equation}
\label{2.9} \EE\left[\rho(X_{k+1},e)^2\right]\le
\EE\left[\rho(X_{k},e)^2\right]-t_{k+1}C_{p,\mu,K}\EE\left[\rho(X_{k},e)^2\right]+C(\b,r,p)t_{k+1}^2.
\end{equation}
An easy induction proves that for $\ell\ge 1$,
\begin{equation}
\label{2.10} \EE\left[\rho(X_{k+\ell},e)^2\right]\le
\prod_{j=1}^\ell
(1-C_{p,\mu,K}t_{k+j})\EE\left[\rho(X_{k},e)^2\right]+C(\b,r,p)\sum_{j=1}^\ell
t_{k+j}^2.
\end{equation}
Letting $\ell\to \infty$ and using the fact that $\sum_{j=1}^\infty t_{k+j}=\infty$ which implies $$\prod_{j=1}^\infty (1-C_{p,\mu,K}t_{k+j})=0,$$ we get
\begin{equation}
\label{2.11} a\le C(\b,r,p)\sum_{j=1}^\infty t_{k+j}^2.
\end{equation}
Finally using $\sum_{j=1}^\infty t_{j}^2<\infty$ we obtain that  $ \lim_{k\to\infty}\sum_{j=1}^\infty t_{k+j}^2=0$, so $a=0$. This proves $L^2$ and almost sure convergence.

Next assume that $p\ge 2$.

For $k\ge 0$ let
$$
t\mapsto E_p(t):=H_p(\g(t)),
$$
$\di \g(t)_{ t\in [0, t_{k+1}]}$ the  geodesic satisfying $\di \dot
\g(0)=-\grad_{X_k}F_p(\cdot,P_{k+1})$. With a calculation similar to~\eqref{2.E} we get for all $t\in
[0,t_{k+1}]$
\begin{equation}
\label{2.E2} E_p''(t)\le 2C(\b,r,p):=\f{p^3}{2}(2r)^{3p-4}\left(2r\b\cotanh (2\b r)+2p-4\right).
\end{equation}
(see e.g. \cite{Yang:10}).
By Taylor formula,
\begin{align*}
H_p(X_{k+1})&=E_p(t_{k+1})\\
&=E_p(0)+t_{k+1}E_p'(0)+\f{t_{k+1}^2}{2}E_p''(t)\quad \hbox{for some $t\in[0,t_{k+1}]$}\\
&\le H_p(X_{k})+t_{k+1}\langle d_{X_k}H_p, \grad_{X_{k}} F_p(\cdot, P_{k+1})\rangle +t_{k+1}^2C(\b,r,p).\\
\end{align*}

We get
\begin{align*}
&\EE\left[H_p(X_{k+1})|\SF_k\right]\\&\le H_p(X_{k})-t_{k+1}\left\langle d_{X_k}H_p,\int_{B(a,r)}\grad_{X_{k}} F_p(\cdot, y)\mu(dy)\right\rangle+C(\b,r,p)t_{k+1}^2\\
&= H_p(X_{k})-t_{k+1}\left\langle d_{X_k}H_p,\grad_{X_{k}} H_p(\cdot)\right\rangle+C(\b,r,p)t_{k+1}^2\\
&= H_p(X_{k})-t_{k+1}\left\|\grad_{X_{k}} H_p(\cdot)\right\|^2+C(\b,r,p)t_{k+1}^2\\
&\le H_p(X_{k})-C_{p,\mu,K}t_{k+1}\left(H_p(X_k)-H_p(e)\right)+C(\b,r,p)t_{k+1}^2\\
\end{align*}
(by Proposition~\ref{2.1})
so that the process $(Y_k)_{k\ge 0}$ defined by
\begin{equation}
\label{2.6bis} Y_0=H_p(X_0)-H_p(e)\quad \hbox{and for $k\ge 1$}\quad
Y_{k}=H_p(X_{k})-H_p(e)-C(\b,r,p)\sum_{j=1}^kt_j^2
\end{equation}
is a bounded  supermartingale. So it converges in $L^1$ and almost surely. Consequently $H_p(X_k)-H_p(e)$
also converges in $L^1$ and almost surely.

Let
\begin{equation}
\label{2.7bis}
a=\lim_{k\to\infty}\EE\left[H_p(X_k)-H_p(e)\right].
\end{equation}
We want to prove that $a=0$. We already proved that
\begin{equation}
\label{2.8bis} 
\begin{split}&\EE\left[H_p(X_{k+1})-H_p(e)|\SF_k\right]\\&\le
H_p(X_k)-H_p(e)-C_{p,\mu,K}t_{k+1}\left(H_p(X_k)-H_p(e)\right)+C(\b,r,p)t_{k+1}^2.
\end{split}
\end{equation}
Taking the expectation we obtain
\begin{equation}
\label{2.9bis} \EE\left[H_p(X_{k+1})-H_p(e)\right]\le
(1-t_{k+1}C_{p,\mu,K})\EE\left[H_p(X_k)-H_p(e)\right]+C(\b,r,p)t_{k+1}^2
\end{equation}
so that proving that $a=0$ is similar to the previous case.

Finally~\eqref{2.H3} proves that $\rho(X_{k},e)^2$ converges in $L^1$ and almost surely to~$0$.
\CQFD

\subsection{Proof of Proposition \ref{pr2}}


Fix $\e>0$. Any diffusion process on $[\e,\infty)$ with generator $G_\d(t)$ is solution of a sde of the type
\begin{equation}
 \label{3.28}
dy_t=\f1tL_\d(y_t)\,dt+\d \s\,dB_t
\end{equation}
where $L_\d(y)=y-\d\n dH_p(y,\cdot)^\sharp$ and $B_t$ and $\s$ are
as in Proposition \ref{pr2}. This sde can be solved explicitely on
$[\e,\infty)$. The symmetric endomorphism $y\mapsto
\n dH_p(y,\cdot)^\sharp$ is diagonalisable in the orthonormal basis
$(e_i)_{1\le i\le d}$ with eigenvalues $(\l_i)_{1\le i\le d}$. The
endomorphism $L_\d=\id-\d \n dH_p(e)(\id,\cdot)^{\sharp}$ is also
diagonalisable in this basis with eigenvalues $(1-\d\l_i)_{1\le i\le
d}$. The solution $\di y_t=\sum_{i=1}^d y_t^i e_i$ of~\eqref{3.28}
started at $\di y_\e=\sum_{i=1}^d y_\e^i e_i$ is given by
\begin{equation}
 \label{3.29}
y_t=\sum_{i=1}^d\left(y_\e^i\e^{\d\l_i-1}+\int_\e^ts^{\d\l_i-1}\langle \d\s \,dB_s,e_i\rangle\right)t^{1-\d\l_i}e_i,\quad t\ge \e.
\end{equation}
Now by definition of $C_{p,\mu,K}$ we clearly have
\begin{equation}
 \label{3.30}
C_{p,\mu,K}\le \min_{1\le i\le d} \l_i.
\end{equation}
So the condition $\d C_{p,\mu,K}>1$ implies that  for all $i$, $\d\l_i-1>0$, and as $\e\to 0$,
\begin{equation}
 \label{3.31}
\int_\e^ts^{\d\l_i-1}\langle \d\s\,dB_s,e_i\rangle \to \int_0^ts^{\d\l_i-1}\langle \d\s\,dB_s,e_i\rangle\quad \mathrm{in\ probability.}
\end{equation}
Assume that a continuous  solution $y_t$  converging in probability to $0$ as $t\to 0^+$ exists. Since $y_\e^i\e^{\d\l_i-1}\to 0$ in probability as $\e\to 0$, we necessarily have using \eqref{3.31}
\begin{equation}\label{3.32bis}
 y_t=\sum_{i=1}^dt^{1-\d\l_i}\int_0^ts^{\d\l_i-1}\langle \d\s \,dB_s,e_i\rangle e_i,\quad t\ge 0.
\end{equation}
Note $y_\d^i$ is Gaussian with variance $\di \f{t\d^2\G(e_i^\ast\otimes e_i^\ast)}{2\d \l_i-1}$, so it converges in $L^2$ to $0$ as $t\to 0$.
Conversely, it is easy to check that equation~\eqref{3.32bis} defines a solution to~\eqref{3.28}.

To prove the a.s. convergence to $0$ we use the representation
$$
\int_0^ts^{\d\l_i-1}\langle \d\s\,dB_s,e_i\rangle=B_{\varphi_i(t)}^i
$$
where $B_s^i$ is a Brownian motion and $\di \varphi_i(t)=\f{\d^2\G(e_i^\ast\otimes e_i^\ast)}{2\d \l_i-1}t^{2\d\l_i-1}$. Then by the law of iterated logarithm
$$
\limsup_{t\downarrow 0}t^{1-\d\l_i}B_{\varphi_i(t)}^i\le \limsup_{t\downarrow 0}t^{1-\d\l_i}\sqrt{2\varphi_i(t)\ln\ln\left(\varphi_i^{-1}(t)\right)}
$$
But for $t$ small we have
$$
\sqrt{2\varphi_i(t)\ln\ln\left(\varphi_i^{-1}(t)\right)}\le t^{\d\l_i-3/4}
$$
so
$$
\limsup_{t\downarrow 0}t^{1-\d\l_i}B_{\varphi_i(t)}^i\le \lim_{t\downarrow 0}t^{1/4}=0.
$$
This proves a.s. convergence to $0$. Continuity is easily checked using the integral representation \eqref{3.32bis}.
\CQFD

\subsection{Proof of Theorem \ref{T1}}
Consider the time homogeneous Markov chain $(Z_k^n)_{k\ge 0}$ with
state space $[0,\infty)\times T_eM$ defined by
\begin{equation}
 \label{3.5}
Z_k^n=\left(\f{k}n,Y_k^n\right).
\end{equation}
The first component has a deterministic evolution and will be denoted by $t_k^n$; it satisfies
\begin{equation}
t_{k+1}^n=t_k^n+\f1n,\quad k\geq 0.
\end{equation}
Let $k_0$ be such that
\begin{equation}
 \frac{\d}{k_0}<\delta_1.
\end{equation}
Using equations \eqref{2.3}, \eqref{3.4} and \eqref{3.01}, we have for $k\geq k_0$,
\begin{equation}
Y_{k+1}^n= \f{nt_k^n+1}{\sqrt{n}}\exp_{e}^{-1}\left(\exp_{\exp_e\f1{\sqrt{n}t_k^n}Y_k^n}\left(-\f{\d}{nt_k^n+1}\grad_{\f1{\sqrt{n}t_k^n}Y_k^n}F_p(\cdot,P_{k+1})\right)\right).
\end{equation}


Consider the transition kernel $P^n(z,dz')$ on $(0,\infty)\times
T_eM$ defined for $z=(t,y)$ by
\begin{equation}
 \label{3.8}\begin{split}
&P^n(z,A)=\\&\PP\left[\left(t+\f1n,\f{nt+1}{\sqrt{n}}\exp_{e}^{-1}\left(\exp_{\exp_e\f1{\sqrt{n}t}y}\left(-\f{\d}{nt+1}\grad_{\exp_{e}\f1{\sqrt{n}t}y}F_p(\cdot,P_{1})\right)\right)\right)\in A\right]
\end{split}
\end{equation}
where $A\in \SB((0,\infty)\times T_eM)$. Clearly this transition kernel drives the evolution of the Markov chain $(Z_k^n)_{k\ge k_0}$.

For the sake of clarity, we divide the proof of Theorem \ref{T1} into four lemmas.
\begin{lemma}
 \label{L3.2}
Assume that either $p\ge 2$ or $e$ does not belong to the support ${\rm supp}(\mu)$ of $\mu$ (note this implies that
for all $x\in {\rm supp}(\mu)$ the function $F_p(\cdot, x)$ is of class $C^2$ in a neighbourhood of $e$). Fix $\d>0$.
 Let $B$ be a bounded set in $T_eM$ and let $0<\e<T$. We have for all $C^2$ function $f$ on $T_eM$
\begin{equation}
 \label{3.18}
\begin{split}
 &n\left(f\left(\f{nt+1}{\sqrt n}\exp_{e}^{-1}\left(\exp_{\exp_e\f1{\sqrt{n}t}y}\left(-\f{\d}{nt+1}\grad_{\exp_{e}\f1{\sqrt{n}t}y}F_p(\cdot,x)\right)\right)\right)-f(y)\right)\\&=\left\langle d_yf,\f{y}{t}\right\rangle -{\sqrt n}\langle d_yf, \d\grad_eF_p(\cdot,x)\rangle-\d\n dF_p(\cdot,x)\left(\grad_yf,\f{y}{t}\right)\\
&+\f{\d^2}{2}{\rm Hess}_yf\left(\grad_eF_p(\cdot,x)\otimes \grad_eF_p(\cdot,x)\right)+O\left(\f1{\sqrt n}\right)
\end{split}
\end{equation}
uniformly in $y\in B$, $x\in {\rm supp}(\mu)$, $t\in [\e,T]$.
\end{lemma}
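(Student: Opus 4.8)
The plan is to obtain \eqref{3.18} from two nested Taylor expansions: first an expansion of the iterated exponential map producing the displacement of the rescaled chain, then the second order Taylor expansion of $f$ at $y$. Write $\e_n=\f1{\sqrt n\,t}$, $a_n=\exp_e(\e_n y)$, let $w_n=-\f{\d}{nt+1}\grad_{a_n}F_p(\cdot,x)\in T_{a_n}M$ be the gradient step taken from $a_n$, set $u_n=\exp_e^{-1}\bigl(\exp_{a_n}w_n\bigr)$ and $\xi_n=\f{nt+1}{\sqrt n}u_n$, so that the left-hand side of \eqref{3.18} equals $n\bigl(f(\xi_n)-f(y)\bigr)$. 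For $n$ large, uniformly in $y\in B$, $x\in{\rm supp}(\mu)$ and $t\in[\e,T]$, all points involved stay in a fixed compact subset of $B(e,2r)$ on which $\exp_e$ is a diffeomorphism and all geometric quantities are bounded, and $|\e_n y|=O(n^{-1/2})$, $|w_n|=O(n^{-1})$. The hypothesis on $\mu$ and $p$ (either $e\notin{\rm supp}(\mu)$, so $\rho^p(\cdot,x)$ is smooth near $e$, or $p\ge2$, so $\rho^p(\cdot,x)$ is $C^2$ near $e$) guarantees that $F_p(\cdot,x)$ is $C^2$ on a fixed neighbourhood of $e$ with a bound on its $2$-jet uniform in $x\in{\rm supp}(\mu)$; this is what I use below.

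First I would expand $u_n$. Consider $\Phi(v,w)=\exp_e^{-1}\bigl(\exp_{\exp_e v}w\bigr)$ for small $v\in T_eM$ and $w\in T_{\exp_e v}M$. It is smooth, $\Phi(v,0)=v$, and $D_w\Phi(v,0)=[D(\exp_e)_v]^{-1}$. The expansion of $D(\exp_e)_v$ coming from the Jacobi equation gives $D(\exp_e)_v=P^v\circ(\id+Q(v))$, with $P^v\colon T_eM\to T_{\exp_e v}M$ parallel transport along $s\mapsto\exp_e(sv)$ and $Q(v)=O(|v|^2)$ — the key point being that there is no term linear in $v$, since the Jacobi field vanishing at $s=0$ also has vanishing second derivative there. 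Hence $[D(\exp_e)_v]^{-1}=(P^v)^{-1}+O(|v|^2)$, and a second order Taylor expansion of $\Phi$ in $w$ (remainder $O(|w|^2)$, uniformly on the compact region) yields
\[
u_n=\e_n y+(P^{\e_n y})^{-1}w_n+O(n^{-2}),
\]
using $|\e_n y|^2\,|w_n|=O(n^{-2})$ and $|w_n|^2=O(n^{-2})$.

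Next I would expand the gradient: since $F_p(\cdot,x)$ is $C^2$ near $e$ uniformly in $x$,
\[
\grad_{a_n}F_p(\cdot,x)=P^{\e_n y}\bigl(\grad_eF_p(\cdot,x)+\e_n\,\n_y\grad F_p(\cdot,x)\bigr)+o(\e_n)
\]
uniformly, where $\n_y\grad F_p(\cdot,x)=\n dF_p(\cdot,x)(y,\cdot)^\sharp$ is the Hessian at $e$ applied to $y$. Substituting into $w_n$, applying $(P^{\e_n y})^{-1}$, then multiplying $u_n$ by $\f{nt+1}{\sqrt n}$ and using $\f{nt+1}{\sqrt n}\,\e_n=1+\f1{nt}$ and $\f{nt+1}{\sqrt n}\cdot\f{\d\e_n}{nt+1}=\f{\d}{nt}$, I obtain
\[
\xi_n-y=-\f{\d}{\sqrt n}\,\grad_eF_p(\cdot,x)+\f1n\bigl(\f yt-\d\,\n_{y/t}\grad F_p(\cdot,x)\bigr)+r_n,
\]
with $r_n=o(n^{-1})$ uniformly (indeed $O(n^{-3/2})$ once the Hölder regularity of $\rho^p(\cdot,x)$ near $e$ is invoked). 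Now apply Taylor's formula to the $C^2$ function $f$ at $y$ with increment $\xi_n-y=O(n^{-1/2})$: the linear term $\langle d_yf,\xi_n-y\rangle$ contributes $-\sqrt n\,\langle d_yf,\d\grad_eF_p(\cdot,x)\rangle+\f1n\bigl(\langle d_yf,\f yt\rangle-\d\,\n dF_p(\cdot,x)(\grad_yf,\f yt)\bigr)+O(n^{-3/2})$, where I used $\langle d_yf,\n_{y/t}\grad F_p(\cdot,x)\rangle=\n dF_p(\cdot,x)(\grad_yf,\f yt)$ by symmetry of the Hessian; the quadratic term $\f12{\rm Hess}_yf(\xi_n-y,\xi_n-y)$ contributes $\f{\d^2}{2n}\,{\rm Hess}_yf\bigl(\grad_eF_p(\cdot,x)\otimes\grad_eF_p(\cdot,x)\bigr)+O(n^{-3/2})$; and the Taylor remainder is $O(n^{-3/2})$ for $f$ sufficiently regular (and $o(n^{-1})$ for merely $C^2$ $f$). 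Multiplying by $n$ gives exactly the right-hand side of \eqref{3.18} with error $O(n^{-1/2})$.

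I expect the crux to be the geometric input together with the bookkeeping of orders. The identity $D(\exp_e)_v=P^v\circ(\id+O(|v|^2))$ with no first order term is precisely what makes the two parallel transports $P^{\e_n y}$ and $(P^{\e_n y})^{-1}$ cancel, keeping the error $O(n^{-1/2})$ rather than $O(1)$: an $O(|v|)$ correction here would feed, after multiplication by $\f{nt+1}{\sqrt n}\asymp\sqrt n$ and then by $n$, into a divergent term. The remaining work is to propagate each remainder through these two rescalings and to check that every expansion above is uniform in $(y,x,t)$ — which follows from compactness of the relevant region of $M$ and of ${\rm supp}(\mu)$, and from the uniform $C^2$ control of $F_p(\cdot,x)$ near $e$ supplied by the hypotheses; one should in passing verify that this regularity (at least $C^2$, in fact with a Hölder Hessian in all cases covered) suffices for the stated rate.
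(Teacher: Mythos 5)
Your proposal is correct, and it arrives at exactly the paper's key intermediate expansion \eqref{3.19} before the final second-order Taylor expansion of $f$; but the route to that expansion is organized differently. The paper introduces the two-parameter geodesic variation $c(a,s,u,v)$ joining $e$ to $\exp_{\exp_e(uy/t)}\bigl(-sv\grad F_p(\cdot,x)\bigr)$, kills the term $\partial_s^2\partial_a c$ by the scaling identity $c(a,s,u,v)=c(a,sv,u,1)$ together with the flatness of $c(\cdot,\cdot,0,1)$ along a single geodesic, and then computes $\dot J(0)$ for the Jacobi field $J(a)=\partial_s c(a,0)$ via the Jacobi equation ($\n_a^2J=O(u^2v)$) and parallel transport of the gradient. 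You instead expand the map $\Phi(v,w)=\exp_e^{-1}(\exp_{\exp_e v}w)$ in $w$, use $D(\exp_e)_v=P^v(\id+O(|v|^2))$ (the same Jacobi-field fact, in the guise of the differential of the exponential), and combine it with a covariant first-order Taylor expansion of $\grad F_p(\cdot,x)$ along the geodesic from $e$. Both arguments rest on the identical geometric cancellation --- curvature corrections to parallel transport are quadratic in the distance from $e$, which is what keeps the error from blowing up after the two rescalings --- so yours is a legitimate, arguably more standard, repackaging rather than a new idea; its advantage is that it avoids the slightly delicate third mixed derivative $\partial_s^2\partial_ac$ of the variation, whose control implicitly needs more than the stated $C^2$ regularity of $F_p(\cdot,x)$. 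Your closing caveat is well taken but not a gap relative to the paper: with only $C^2$ regularity of $f$ and of $F_p(\cdot,x)$ (relevant when $p\in(2,4)$ and $e\in{\rm supp}(\mu)$) one gets a uniform $o(1)$ remainder rather than $O(n^{-1/2})$; the paper's own proof has the same looseness, and the uniform $o(1)$ is all that is used in Lemma~\ref{L3.1} and Theorem~\ref{T1}.
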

\begin{proof}
Let $x\in {\rm supp}(\mu)$, $y\in T_eM$, $u,v\in \RR$ sufficiently close to~$0$,   and $\di q=\exp_e\left(\f{uy}{t}\right)$.
 For $s\in [0,1]$ denote by $a\mapsto c(a,s,u,v)$ the geodesic with endpoints $c(0,s,u,v)=e$ and $$ c(1,s,u,v)=\exp_{\exp_e\left(\f{uy}{t}\right)}\left(-vs\grad_{\exp_e\left(\f{uy}{t}\right)}F_p(\cdot ,x)\right):$$
$$
c(a,s,u,v)=\exp_e\left\{a\exp_e^{-1}\left[\exp_{\exp_e\left(\f{uy}{t}\right)}\left(-sv\grad_{\exp_e\left(\f{uy}{t}\right)}F_p(\cdot, x)\right)\right]\right\}.
$$
This is a $C^2$ function of $(a,s,u,v)\in[0,1]^2\times (-\eta,\eta)^2$, $\eta$ sufficiently small. It also depends in a $C^2$ way of $x$ and $y$.
 Letting $\di c(a,s)=c\left(a,s,\f{1}{\sqrt{n}}, \f{\d}{nt+1}\right)$, we have 
$$
\exp_{e}^{-1}\left(\exp_{\exp_e\f1{\sqrt{n}t}y}\left(-\f{\d}{nt+1}\grad_{\exp_{e}\f1{\sqrt{n}t}y}F_p(\cdot,x)\right)\right)=\partial_ac(0,1).
$$
So we need a Taylor expansion up to order $n^{-1}$ of $\di \f{nt+1}{\sqrt{n}}\partial_ac(0,1)$.

We have $c(a,s,0,1)=\exp_e\left(-as\grad_eF_p(\cdot, x)\right)$ and this implies 
$$
\partial_s^2\partial_a c(0,s,0,1)=0, \quad\hbox{so}\quad\partial_s^2\partial_a c(0,s,u,1)=O(u).
$$
On the other hand the identities $c(a,s,u,v)=c(a,sv,u,1)$ yields  $\partial_s^2\partial_a c(a,s,u,v)=v^2\partial_s^2\partial_a c(a,s,u,1)$, so we obtain
$$
\partial_s^2\partial_a c(0,s,u,v)=O(uv^2)
$$
and this yields 
$$
\partial_s^2\partial_a c(0,s)=O(n^{-5/2}),
$$
uniformly in $s,x,y, t$.
But since 
$$
\left\|\partial_ac(0,1)-\partial_ac(0,0)-\partial_s\partial_ac(0,0)\right\|\le \f12\sup_{s\in [0,1]}\|\partial_s^2\partial_ac(0,s)\|
$$
we only need to estimate $\partial_ac(0,0)$ and $\partial_s\partial_ac(0,0)$.

 Denoting by $J(a)$ the Jacobi field $\partial_sc(a,0)$ we have
$$
\f{nt+1}{\sqrt n}\partial_ac(0,1)=\f{nt+1}{\sqrt n}\partial_ac(0,0)+\f{nt+1}{\sqrt n}\dot J(0)+O\left(\f1{n^{2}}\right).
$$
On the other hand
$$
\f{nt+1}{\sqrt n}\partial_ac(0,0)=\f{nt+1}{\sqrt n}\f{y}{\sqrt{n}t}=y+\f{y}{nt}
$$
so it remains to estimate $\dot J(0)$.

The Jacobi field $a\mapsto J(a,u,v)$ with endpoints $J(0,u,v)=0_e$ and $$J(1,u,v)=-v\grad_{\exp_e\left(\f{uy}{t}\right)}F_p(\cdot,x)$$
satisfies 
$$
\n_a^2J(a,u,v)=-R(J(a,u,v), \partial_ac(a,0,u,v))\partial_ac(a,0,u,v)=O(u^2v).
$$
This implies that 
$$
\n_a^2J(a)=O(n^{-2}).
$$
Consequently, denoting by $P_{x_1,x_2} : T_{x_1}M\to T_{x_2}M$ the parallel transport along the minimal geodesic from $x_1$ to $x_2$ (whenever it is unique) we have
\begin{equation}\label{JF1}
P_{c(1,0),e}J(1)=J(0)+\dot J(0)+O(n^{-2})=\dot J(0)+O(n^{-2}).
\end{equation}
But we also have 
\begin{align*}
P_{c(1,0,u,v),e}J(1,u,v)&=P_{c(1,0,u,v),e}\left(-v\grad_{c(1,0,u,v)}F_p(\cdot, x)\right)\\
&=-v\grad_eF_p(\cdot,x)-v\n_{\partial_ac(0,0,u,v)}\grad_{\cdot}F_p(\cdot, x)+O(vu^2)\\
&=-v\grad_eF_p(\cdot,x)-v\n dF_p(\cdot,x)\left(\f{uy}{t},\cdot\right)^\sharp+O(vu^2)
\end{align*}
where we used $\partial_ac(0,0,u,v)=\f{uy}{t}$ and for vector fields $A,B$ on $TM$ and a $C^2$ function $f_1$ on $M$
\begin{align*}
 \langle \n_{A_e}\grad f_1, B_e\rangle&=A_e\langle \grad f_1,B_e\rangle-\langle \grad f_1,\n_{A_e}B\rangle\\
&=A_e\langle df_1,B_e\rangle-\langle df_1,\n_{A_e}B\rangle\\
&=\n df_1(A_e,B_e)
\end{align*}
which implies 
$$
\n_{A_e}\grad f_1=\n df_1(A_e,\cdot)^\sharp.
$$
We obtain 
$$
P_{c(1,0),e}J(1)=-\f{\d}{nt+1}\grad_eF_p(\cdot,x)-\f{\d}{\sqrt{n}(nt+1)}\n dF_p(\cdot,x)\left(\f{y}{t},\cdot\right)^\sharp+O(n^{-2}).
$$
Combining with~\eqref{JF1} this gives 
$$
\dot J(0)=-\f{\d}{nt+1}\grad_eF_p(\cdot, x)-\f{\d}{nt+1}\n dF_p(\cdot, x)\left(\f{y}{\sqrt{n}t},\cdot\right)^\sharp+O\left(\f{1}{n^2}\right).
$$
 So finally
\begin{equation}\label{3.19}
 \f{nt+1}{\sqrt n}\partial_ac(0,1)=y+\f{y}{nt}-\f{\d}{\sqrt n}\grad_eF_p(\cdot, x)-\d\n dF_p(\cdot, x)\left(\f{y}{nt},\cdot\right)^\sharp+O\left(n^{-3/2}\right).
\end{equation}
To get the final result we are left to make a Taylor expansion of $f$ up to order~$2$.
\end{proof}

Define the following quantities:
\begin{equation}
 \label{3.9}
b_n(z)=n\int_{\{|z'-z|\le 1\}}(z'-z)P^n(z,dz')
\end{equation}
and
\begin{equation}
 \label{3.10}
a_n(z)=n\int_{\{|z'-z|\le 1\}} (z'-z)\otimes (z'-z) P^n(z,dz').
\end{equation}
The following property holds:
\begin{lemma}
 \label{L3.1}
Assume that either $p\ge 2$ or $e$ does not belong to the support ${\rm supp}(\mu)$.
\begin{itemize}
 \item[(1)] For all $R>0$ and $\e>0$, there exists $n_0$ such that for all $n\ge n_0$ and $z\in [\e,T]\times
 B(0_e,R)$, where $B(0_e,R)$ is the open ball in $T_eM$ centered at
 the origin with radius $R$,
\begin{equation}
 \label{3.11}
\int 1_{\{|z'-z|>1\}}\,P^n(z,dz')=0.
\end{equation}
\item[(2)] For all $R>0$ and $\e>0$,
\begin{equation}
 \label{3.12}
\lim_{n\to\infty}\sup_{z\in [\e,T]\times B(0_e,R)} |b_n(z)-b(z)|=0
\end{equation}
with
\begin{equation}
 \label{3.13}
b(z)=\left(1,\f1tL_\d(y)\right)\quad\hbox{and}\quad L_\d(y)=y-\d\n dH(y,\cdot)^\sharp.
\end{equation}
\item[(3)] For all $R>0$ and $\e>0$,
\begin{equation}
 \label{3.14}
\lim_{n\to\infty}\sup_{z\in [\e,T]\times B(0_e,R)} |a_n(z)-a(z)|=0
\end{equation}
with
\begin{equation}
 \label{3.15}
a(z)=\d^2{\rm diag}(0,\G)\quad \hbox{and}\quad \G=\EE\left[\grad_eF_p(\cdot,P_1)\otimes \grad_eF_p(\cdot, P_1)\right].
\end{equation}
\end{itemize}

\end{lemma}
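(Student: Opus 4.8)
The plan is to derive Lemma~\ref{L3.1} directly from the uniform Taylor expansion \eqref{3.18} in Lemma~\ref{L3.2}, which already contains all the geometric information; what remains is essentially bookkeeping about which terms survive after integrating against $\mu$ and multiplying by $n$. First I would prove part~(1): on $[\e,T]\times B(0_e,R)$ the increment $z'-z$ has a deterministic first component equal to $1/n$, and by the estimate \eqref{3.19} in the proof of Lemma~\ref{L3.2}, the second component $Y_{k+1}^n-Y_k^n$ equals $y/(nt)-\d n^{-1/2}\grad_eF_p(\cdot,x)+O(n^{-1})$ uniformly in $x\in\mathrm{supp}(\mu)$, $y\in B(0_e,R)$, $t\in[\e,T]$; since $\mathrm{supp}(\mu)$ is compact, $\grad_eF_p(\cdot,x)$ is bounded there (this uses $e\notin\mathrm{supp}(\mu)$ or $p\ge 2$, so that $F_p(\cdot,x)$ is $C^2$ near $e$), hence the increment is $O(n^{-1/2})$ uniformly, so it is $\le 1$ for $n\ge n_0$, and the indicator $1_{\{|z'-z|>1\}}$ vanishes $P^n(z,\cdot)$-a.s.

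Given part~(1), the truncation in \eqref{3.9}--\eqref{3.10} is irrelevant for large $n$, so $b_n(z)=n\,\EE[z'-z]$ and $a_n(z)=n\,\EE[(z'-z)\otimes(z'-z)]$, the expectation being over $P_1\sim\mu$. For the first component of $b_n$ this is exactly $n\cdot(1/n)=1$; for the second component I would apply \eqref{3.18} with $f$ equal to each coordinate linear functional (so $d_yf$ is constant, $\mathrm{Hess}\,f=0$), obtaining
\[
n\bigl(Y_{k+1}^n-Y_k^n\bigr)\text{-component}=\frac{y}{t}-\sqrt n\,\d\grad_eF_p(\cdot,x)-\d\,\n dF_p(\cdot,x)\Bigl(\frac{y}{t},\cdot\Bigr)^\sharp+O(n^{-1/2}).
\]
The only dangerous term is the $\sqrt n$ one; here is where the hypothesis $\d>C_{p,\mu,K}^{-1}$ enters only indirectly — what we actually use is that $e$ is the minimizer of $H_p$, so $\grad_e H_p=\int\grad_eF_p(\cdot,x)\,\mu(dx)=0$. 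Thus integrating against $\mu$ kills the $\sqrt n$ term \emph{exactly}, and by dominated convergence (uniform $O(n^{-1/2})$ control from Lemma~\ref{L3.2}) we get $b_n(z)\to(1,\,y/t-\d\,\n dH_p(y,\cdot)^\sharp/t)$, uniformly on $[\e,T]\times B(0_e,R)$, which is \eqref{3.13} since $\int\n dF_p(\cdot,x)(\cdot,\cdot)\,\mu(dx)=\n dH_p(\cdot,\cdot)$ (differentiating under the integral sign is justified by the local $C^2$ hypothesis and compactness of $\mathrm{supp}(\mu)$). The main obstacle is making this interchange of $\mu$-integration and the $a$-derivative rigorous in the case $p\ge 2$ when $e$ \emph{may} lie in $\mathrm{supp}(\mu)$: then $F_p(\cdot,e)$ is not $C^2$ at $e$, but the set $\{x:\ \rho(e,x)<\text{small}\}$ has small $\mu$-mass and $\grad_eF_p(\cdot,x)=-p\rho^{p-1}(e,x)n(e,x)$ is still continuous and bounded (as $p\ge2$), while $\n dF_p(\cdot,x)$ grows like $\rho^{p-2}$ which is bounded for $p\ge 2$; so in fact the integrability is uniform and the interchange is fine — this needs to be spelled out but presents no real difficulty.

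For part~(3), the entries of $(z'-z)\otimes(z'-z)$ involving the first component are $O(n^{-2})$ or $O(n^{-3/2})$ (product of the deterministic $1/n$ with the $O(n^{-1/2})$ second component), so after multiplication by $n$ they vanish in the limit, giving the $\mathrm{diag}(0,\cdot)$ structure. For the $T_eM\otimes T_eM$ block, from \eqref{3.19} the second component of $z'-z$ is $-\d n^{-1/2}\grad_eF_p(\cdot,x)+O(n^{-1})$, so
\[
n\,(z'-z)^{\otimes 2}\bigm|_{T_eM\otimes T_eM}=\d^2\,\grad_eF_p(\cdot,x)\otimes\grad_eF_p(\cdot,x)+O(n^{-1/2}),
\]
uniformly in $x\in\mathrm{supp}(\mu)$, $y\in B(0_e,R)$, $t\in[\e,T]$; integrating against $\mu$ and letting $n\to\infty$ yields $a(z)=\d^2\,\mathrm{diag}(0,\G)$ with $\G=\EE[\grad_eF_p(\cdot,P_1)\otimes\grad_eF_p(\cdot,P_1)]$, and the convergence is uniform because the error term is uniformly $O(n^{-1/2})$ and $\grad_eF_p(\cdot,x)$ is uniformly bounded on $\mathrm{supp}(\mu)$. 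Note that, unlike in $b_n$, no cancellation is needed here since the leading term is quadratic in $\grad_eF_p$ and hence nonnegative; this is why $\G$ appears rather than $0$. Once these three lemmas are in place, the subsequent lemmas (which I anticipate verify a Lindeberg-type condition and tightness) combine with a standard diffusion-approximation theorem (Stroock--Varadhan / Ethier--Kurtz type) to identify the limit as the diffusion with generator $G_\d(t)$ from \eqref{3.26bis}, which by Proposition~\ref{pr2} is $y_\d$.
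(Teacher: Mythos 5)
Your proposal is correct and follows essentially the same route as the paper: part (1) from the uniform $O(n^{-1/2})$ bound implied by \eqref{3.19}, part (2) by applying the expansion of Lemma~\ref{L3.2}, killing the $\sqrt n$ term through $\grad_e H_p=\EE[\grad_e F_p(\cdot,P_1)]=0$, and identifying $\int \n dF_p(\cdot,x)\,\mu(dx)=\n dH_p$, and part (3) from the quadratic leading term $\d^2\grad_eF_p\otimes\grad_eF_p$, all uniformly on $[\e,T]\times B(0_e,R)$. The extra care you take (the off-diagonal entries in $a_n$, and the integrability of $\n dF_p(\cdot,x)$ when $p\ge 2$ and $e$ may lie in ${\rm supp}(\mu)$) is consistent with the paper's hypotheses and does not change the argument.
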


\begin{proof}
\begin{itemize}
\item[(1)]
 We use the notation $z=(t,y)$ and $z'=(t',y')$. We have
\begin{align*}
 &\int 1_{\{|z'-z|>1\}}\,P^n(z,dz')\\&=\int 1_{\{\max(|t'-t|,|y'-y|)>1\}}P^n(z,dz')\\
&=\int 1_{\{\max(\f1n,|y'-y|)>1\}}P^n(z,dz')\\
&=\PP\left[\left|\f{nt+1}{\sqrt{n}}\exp_{e}^{-1}\left(\exp_{\exp_e\f1{\sqrt{n}t}y}\left(-\f{\d}{nt+1}\grad_{\exp_{e}\f1{\sqrt{n}t}y}F_p(\cdot,P_{1})\right)\right)-y\right|>
1\right].
\end{align*}
On the other hand, since $F_p(\cdot, x)$ is of class $C^2$ in a
neighbourhood of~$e$, we have by~\eqref{3.19}
\begin{equation}
 \label{3.16}
\left|\f{nt+1}{\sqrt{n}}\exp_{e}^{-1}\left(\exp_{\exp_e\f1{\sqrt{n}t}y}\left(-\f{\d}{nt+1}\grad_{\exp_{e}\f1{\sqrt{n}t}y}F_p(\cdot,P_{1})\right)\right)-y\right|\le \f{C\d}{\sqrt n\e}
\end{equation}
for some constant $C>0$.

\item[(2)] Equation \eqref{3.11} implies that for $n\ge n_0$
\begin{align*}
 &b_n(z)\\&=n\int (z'-z)\, P^n(z,dz')\\
&=n\left(\f1n,\E\left[\f{nt+1}{\sqrt{n}}\exp_{e}^{-1}\left(\exp_{\exp_e\f{y}{\sqrt{n}t}}\left(-\f{\d}{nt+1}\grad_{\exp_{e}\f{y}{\sqrt{n}t}}F_p(\cdot,P_{1})\right)\right)\right]-y\right).
\end{align*}
We have by lemma~\ref{L3.2}
\begin{align*}
&n\left(\f{nt+1}{\sqrt{n}}\exp_{e}^{-1}\left(\exp_{\exp_e\f1{\sqrt{n}t}y}\left(-\f{\d}{nt+1}\grad_{\exp_{e}\f1{\sqrt{n}t}y}F_p(\cdot,P_{1})\right)\right)-y\right)\\&=\f1{t}y-\d\sqrt{n}\grad_eF_p(\cdot,P_{1})-\d\n dF_p(\cdot,
P_1)\left(\f1{t}y,\cdot\right)^\sharp +O\left(\f1{n^{1/2}}\right)
\end{align*}
a.s. uniformly in $n$, and since
$$
\EE\left[\d\sqrt{n}\grad_eF_p(\cdot,P_{1})\right]=0,
$$
 this implies that
$$
n\left(\E\left[\f{nt+1}{\sqrt{n}}\exp_{e}^{-1}\left(\exp_{\exp_e\f1{\sqrt{n}t}y}\left(-\f{\d}{nt+1}\grad_{\exp_{e}\f1{\sqrt{n}t}y}F_p(\cdot,P_{1})\right)\right)\right]-y\right)
$$
converges to
\begin{equation}
 \label{3.17}
\f1ty-\EE\left[\d\n dF_p(\cdot, P_1)\left(\f1ty,\cdot\right)^\sharp\right]=\f1ty-\d\n dH_p\left(\f1ty,\cdot\right)^\sharp.
\end{equation}
Moreover the convergence is uniform in $z\in [\e,T]\times B(0_e,R)$,
so this yields~\eqref{3.12}.
\item[(3)]
In the same way, using lemma~\ref{L3.2},
\begin{align*}
& n\int(y'-y)\otimes (y'-y)\,P^n(z,dz')\\
&=\f1n\EE\left[\left(-\sqrt{n}\d\grad_eF_p(\cdot, P_1)\right)\otimes \left(-\sqrt{n}\d\grad_eF_p(\cdot, P_1)\right)\right]+o(1)\\
&=\d^2\EE\left[\grad_eF_p(\cdot, P_1)\otimes \grad_eF_p(\cdot,
P_1)\right]+o(1)
\end{align*}
uniformly in $z\in [\e,T]\times B(0_e,R)$, so this
yields~\eqref{3.14}.
\end{itemize}
\end{proof}

\begin{lemma}
 \label{L3.3}
Suppose that $\di t_n=\f{\d}{n}$ for some $\d>0$. For all $\d>C_{p,\mu,K}^{-1}$,
 \begin{equation}
  \label{3.20}
\sup_{n\ge 1}n\EE\left[\rho^2(e,X_n)\right]<\infty.
 \end{equation}
\end{lemma}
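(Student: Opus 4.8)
The plan is to push the one-step estimate already obtained in the proof of Theorem~\ref{2.P1} through the specific step sizes $t_k=\d/k$ and deduce the $O(1/k)$ decay. Put $u_k:=\EE[\rho^2(e,X_k)]$ when $p\in[1,2)$ and $u_k:=\EE[H_p(X_k)-H_p(e)]$ when $p\ge2$; in both cases $u_k$ is finite since $X_k\in K$ and $K$ is compact. The inequalities~\eqref{2.9} and~\eqref{2.9bis} both take the form
\[
u_{k+1}\le \bigl(1-C_{p,\mu,K}\,t_{k+1}\bigr)\,u_k+C(\b,r,p)\,t_{k+1}^2,\qquad k\ge0 .
\]
Choose $k_0$ large enough that $\d/k_0<\d_1$ --- so that $t_k=\d/k$ for every $k\ge k_0$ by~\eqref{3.01} --- and also so that $C_{p,\mu,K}\d-1\le k_0$. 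For $k\ge k_0$ the recursion becomes $u_{k+1}\le\bigl(1-C_{p,\mu,K}\d/(k+1)\bigr)u_k+C(\b,r,p)\d^2/(k+1)^2$; since the finitely many earlier terms do not affect~\eqref{3.20}, it is enough to bound $u_k$ for $k\ge k_0$.

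I would then change variables to $v_k:=k\,u_k$. Multiplying the recursion by $k+1$ and simplifying gives, for $k\ge k_0$,
\[
v_{k+1}\le (k+1-C_{p,\mu,K}\d)\,u_k+\frac{C(\b,r,p)\d^2}{k+1}=\Bigl(1-\frac{C_{p,\mu,K}\d-1}{k}\Bigr)v_k+\frac{C(\b,r,p)\d^2}{k+1} .
\]
This is where the hypothesis $\d>C_{p,\mu,K}^{-1}$ is decisive: it ensures $c:=C_{p,\mu,K}\d-1>0$, so the multiplicative factor is $1-c/k$ with $c>0$, lying in $[0,1)$ for $k\ge k_0$. Bounding $1/(k+1)\le1/k$, a one-line induction now controls $v_k$: with $M:=\max\bigl(v_{k_0},\,C(\b,r,p)\d^2/c\bigr)$, if $v_k\le M$ then
\[
v_{k+1}\le \Bigl(1-\frac{c}{k}\Bigr)M+\frac{C(\b,r,p)\d^2}{k}=M+\frac{C(\b,r,p)\d^2-cM}{k}\le M .
\]
Hence $k\,u_k=v_k\le M$ for all $k\ge k_0$, and therefore $\sup_{k\ge1}k\,u_k<\infty$.

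Finally, when $p\ge2$ one returns from $u_k=\EE[H_p(X_k)-H_p(e)]$ to the distance by~\eqref{2.H3}: since $X_k\in K$, $\rho^2(e,X_k)\le\frac{2}{C_{p,\mu,K}}\bigl(H_p(X_k)-H_p(e)\bigr)$, so $n\,\EE[\rho^2(e,X_n)]\le\frac{2}{C_{p,\mu,K}}\,n\,u_n$ is bounded as well; when $p\in[1,2)$ this step is vacuous. This yields~\eqref{3.20} in all cases. The argument is elementary and I expect no genuine obstacle; the only points needing a little care are the bookkeeping at the threshold index $k_0$ (where~\eqref{3.01} still truncates the steps by $\d_1$) and the observation that one needs the \emph{strict} inequality $\d C_{p,\mu,K}>1$, not merely $\d C_{p,\mu,K}\ge1$, for the induction to close with a finite constant~$M$.
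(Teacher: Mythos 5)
Your proof is correct. It starts from the same one-step inequalities \eqref{2.9} and \eqref{2.9bis} (equivalently \eqref{3.21}, \eqref{3.21bis}) and ends with the same reduction via \eqref{2.H3} when $p\ge 2$, but the middle step is genuinely different from the paper's. The paper unrolls the recursion using $1-x\le e^{-x}$, bounds $\sum_{j=i+1}^k t_j$ by an integral comparison, and then carries out the asymptotic analysis \eqref{3.24}--\eqref{3.25} of the resulting weighted sum (following Nedic--Bertsekas, Lemma~0.0.1); this yields not only boundedness of $k\,\EE[\rho^2(e,X_k)]$ but the sharp asymptotic constant $\d^2C(\b,r,p)/(\d C_{p,\mu,K}-1)$. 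You instead pass to $v_k=k\,u_k$ and close a one-step induction $v_{k+1}\le(1-c/k)v_k+C(\b,r,p)\d^2/k$ with $c=\d C_{p,\mu,K}-1>0$, which is more elementary and self-contained (no exponential bound, no integral comparison, no external lemma), at the cost of producing only a uniform bound $M=\max\bigl(v_{k_0},C(\b,r,p)\d^2/c\bigr)$ rather than the precise rate. Your bookkeeping is also sound on the two points where care is needed: you take $k_0$ large enough that the truncation in \eqref{3.01} is inactive and that $k_0\ge \d C_{p,\mu,K}-1$ so the factor $1-c/k$ stays in $[0,1)$, and you correctly identify that the strict inequality $\d>C_{p,\mu,K}^{-1}$ is what makes $c>0$ and lets the induction close --- exactly the role this hypothesis plays in the paper's version as well.
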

\begin{proof}

First consider the case $p\in[1,2)$.

 We know by \eqref{2.9} that there exists some constant $C(\b,r,p)$ such that
\begin{equation}\label{3.21}
\EE\left[\rho^2(e,X_{k+1})\right]\le
\EE\left[\rho^2(e,X_{k})\right]\exp\left(-C_{p,\mu,K}t_{k+1}\right)+C(\b,r,p)t_{k+1}^2.
\end{equation}
From this~\eqref{3.20} is a consequence of  Lemma~0.0.1 (case $\a>1$) in \cite{Nedic-Bertsekas:00}. We give the proof for completeness.
We deduce easily by induction that for all $k\ge k_0$,
\begin{equation}
 \label{3.22}
\begin{split}
&\EE\left[\rho^2(e,X_{k})\right]\\&\le\EE\left[\rho^2(e,X_{k_0})\right]\exp\left(-C_{p,\mu,K}\sum_{j=k_0+1}^kt_j\right)+C(\b,r,p)\sum_{i=k_0+1}^kt_i^2\exp\left(-C_{p,\mu,K}\sum_{j=i+1}^kt_j\right),
\end{split}
\end{equation}
where the convention $\sum_{j=k+1}^k t_j=0$ is used. With
$t_n=\f{\d}n$, the following inequality holds for all $i\ge k_0$ and
$k\ge i$:
\begin{equation}
 \label{3.23}
\sum_{j=i+1}^kt_j=\d\sum_{j=i+1}^k\f1j\ge
\d\int_{i+1}^{k+1}\f{dt}t\ge \d\ln\f{k+1}{i+1}.
\end{equation}
Hence,
 \begin{equation}
 \label{3.24}
\begin{split}
&\EE\left[\rho^2(e,X_{k})\right]\\&\le\EE\left[\rho^2(e,X_{k_0})\right]\left(\f{k_0+1}{k+1}\right)^{\d
C_{p,\mu,K}}+\f{\d^2C(\b,r,p)}{(k+1)^{\d
C_{p,\mu,K}}}\sum_{i=k_0+1}^k\f{(i+1)^{\d C_{p,\mu,K}}}{i^2}.
\end{split}
\end{equation}
For $\d C_{p,\mu,K}>1$ we have as $k\to\infty$
\begin{equation}
 \label{3.25}
\f{\d^2C(\b,r,p)}{(k+1)^{\d
C_{p,\mu,K}}}\sum_{i=k_0+1}^k\f{(i+1)^{\d C_{p,\mu,K}}}{i^2}\sim
\f{\d^2C(\b,r,p)}{(k+1)^{\d C_{p,\mu,K}}}\f{k^{\d C_{p,\mu,K}-1}}{\d
C_{p,\mu,K}-1}\sim \f{\d^2C(\b,r,p)}{\d C_{p,\mu,K}-1}k^{-1}
\end{equation}
and
$$
\EE\left[\rho^2(e,X_{k_0})\right]\left(\f{k_0+1}{k+1}\right)^{\d
C_{p,\mu,K}}=o(k^{-1}).
$$
This implies that the sequence $k\EE\left[\rho^2(e,X_{k})\right]$ is bounded.

Next consider the case $p\ge 2$.

Now we have by \eqref{2.9bis} that 
\begin{equation}\label{3.21bis}
\EE\left[H_p(X_{k+1})-H_p(e)\right]\le
\EE\left[H_p(X_{k})-H_p(e)\right]\exp\left(-C_{p,\mu,K}t_{k+1}\right)+C(\b,r,p)t_{k+1}^2.
\end{equation}
From this, arguing similarly, we obtain that the sequence $k\EE\left[H_p(X_{k})-H_p(e)\right]$ is bounded. We conclude with~\eqref{2.H3}.
\end{proof}

\begin{lemma}
 \label{L3.4}
Assume $\d >C_{p,\mu,K}^{-1}$ and that $H_p$ is $C^2$ in a neighbourhood of~$e$.
For all $0<\e<T$, the sequence of processes $\di \left(Y_{[nt]}^n\right)_{\e\le t\le T}$ is tight in $\DD([\e,T],\RR^d)$.
\end{lemma}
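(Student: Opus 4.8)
The plan is to establish tightness of the family $(Y^n_{[nt]})_{\e\le t\le T}$ in $\DD([\e,T],\RR^d)$ via the standard moment criterion for processes with jumps (Aldous/Billingsley): it suffices to control the conditional increments of the Markov chain $(Z^n_k)$ restarted at time $[\,n\e\,]$. Concretely, I would first use Lemma~\ref{L3.3} to obtain a uniform second-moment bound: since $\d>C_{p,\mu,K}^{-1}$, we have $\sup_n n\EE[\rho^2(e,X_n)]<\infty$, and because $Y^n_k=\frac k{\sqrt n}\exp_e^{-1}X_k$ lives in the fixed bounded set $\exp_e^{-1}K$ up to the $k/\sqrt n$ factor, this yields $\sup_{n}\sup_{[n\e]\le k\le [nT]}\EE[|Y^n_k|^2]<\infty$. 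In particular, for any $R$, the probability that $(Y^n_{[nt]})_{[n\e]\le\cdot\le[nT]}$ exits $B(0_e,R)$ is small uniformly in $n$ by Markov's inequality, so we may freely localize to $z\in[\e,T]\times B(0_e,R)$ and invoke the estimates of Lemma~\ref{L3.1}.

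On the event that the chain stays in $[\e,T]\times B(0_e,R)$, Lemma~\ref{L3.1}(1) shows the jumps satisfy $|Z^n_{k+1}-Z^n_k|\le 1$ for $n$ large, and in fact the refined expansion~\eqref{3.19} gives $|Y^n_{k+1}-Y^n_k|=O(n^{-1/2})$ uniformly; moreover Lemma~\ref{L3.1}(2)--(3) give $\EE[Y^n_{k+1}-Y^n_k\mid\SF_k]=O(n^{-1})$ and $\EE[|Y^n_{k+1}-Y^n_k|^2\mid\SF_k]=O(n^{-1})$, with constants uniform over $z$ in the localizing set. Decomposing $Y^n_{[nt]}-Y^n_{[ns]}$ into its compensator part and a martingale part, the compensator is Lipschitz in $t$ with a deterministic bound (it is a Riemann sum of $b_n$, which is uniformly bounded), while for the martingale part one applies the Burkholder--Davis--Gundy inequality together with the conditional variance bound to get $\EE[|M^n_{[nt]}-M^n_{[ns]}|^2\mid\SF_{[ns]}]\le C(t-s)+C/n$. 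This produces an estimate of the form $\EE[|Y^n_{[nt]}-Y^n_{[ns]}|^2\,|\,Y^n_{[ns]}-Y^n_{[nu]}|^2]\le C(t-u)^2$ (for $u\le s\le t$ with $t-u\ge 1/n$, and a trivial bound otherwise), which is precisely Billingsley's condition (13.14) guaranteeing tightness in $\DD([\e,T],\RR^d)$, once combined with the compact-containment estimate from the paragraph above.

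The main obstacle is the interplay between the localization and the a priori bound: Lemmas~\ref{L3.1} and~\ref{L3.2} are only valid on $[\e,T]\times B(0_e,R)$, so one must argue that the chain does not leave this set with non-negligible probability, and this is exactly where the hypothesis $\d>C_{p,\mu,K}^{-1}$ (through Lemma~\ref{L3.3}) and the lower bound on $t\ge\e>0$ are essential — without the $L^2$ control of $\rho(e,X_n)$ at rate $n^{-1}$ the rescaled variables $Y^n_k$ would not be tight in the first place. A clean way to handle this is to introduce stopping times $\tau^n_R=\inf\{k\ge[n\e]: |Y^n_k|>R\}$, prove the moment increment bound for the stopped process $Y^n_{k\wedge\tau^n_R}$ (where all the uniform estimates of Lemma~\ref{L3.1} apply verbatim), deduce tightness of the stopped processes, and then let $R\to\infty$ using $\sup_n\PP(\tau^n_R<[nT])\le C R^{-2}\to 0$ from the uniform second moment bound. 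The remaining steps — verifying the BDG computation and the Riemann-sum bound for the drift — are routine given~\eqref{3.19} and Lemma~\ref{L3.1}.
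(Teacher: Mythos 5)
Your overall strategy (localize to $[\e,T]\times B(0_e,R)$, verify a moment criterion for increments, then release the localization) can be made to work, but as written it has a genuine gap precisely at the compact-containment step, which you treat as routine. Lemma~\ref{L3.3} gives $\sup_n\sup_{[n\e]\le k\le [nT]}\EE\left[|Y^n_k|^2\right]<\infty$, i.e.\ a second-moment bound at each \emph{fixed} time; Markov's inequality applied to this controls $\PP(|Y^n_k|>R)$ for each $k$ separately, not the running maximum, and a union bound over the $O(n)$ relevant indices diverges. So neither the claim that the probability of exiting $B(0_e,R)$ is small uniformly in $n$ ``by Markov's inequality'', nor the later claim $\sup_n\PP(\tau^n_R<[nT])\le CR^{-2}$ ``from the uniform second moment bound'', is justified as stated. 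An actual argument is needed: for instance, apply Doob's inequality to the martingale part of the stopped chain $Y^n_{k\wedge\tau^n_R}$ (conditional variances per step are $O(1/n)$ by Lemma~\ref{L3.1}(3) on the localized set) together with a discrete Gronwall lemma to absorb the drift — note that the limiting drift satisfies $|b(t,y)|\le C(1+|y|)/\e$, so on $B(0_e,R)$ the accumulated compensator over $[\e,T]$ is of order $R$ and cannot be treated as a harmless ``uniformly bounded'' term when you want an exit-probability bound decaying in $R$; alternatively, exploit the conditional recursion \eqref{2.8} and $\d C_{p,\mu,K}>1$ to show $|Y^n_k|^2$ is a supermartingale up to controlled corrections and use a maximal inequality. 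Without some such step the localization, on which all of Lemma~\ref{L3.1} and \eqref{3.19} rely, cannot be removed and tightness of the unstopped processes does not follow. The increment estimates themselves (compensator plus martingale decomposition, conditional variance $O(1/n)$ per step, the product trick yielding Billingsley's condition when $t-u\ge 1/n$) are fine for the stopped processes, with constants depending on $R$.

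For comparison, the paper takes a shorter route that avoids compact containment altogether: it uses Lemma~\ref{L3.3} only at the single time $\e$, where Chebyshev does suffice to get tightness of the initial values $Y^n_{[n\e]}$, extracts a subsequence whose initial value converges in law, and then invokes Theorem~11.2.3 of Stroock--Varadhan together with Lemma~\ref{L3.1} (after a Skorokhod representation of the initial condition) to obtain weak convergence of the whole process along that subsequence to the diffusion with generator $G_\d(t)$; relative compactness, hence tightness in $\DD([\e,T],\RR^d)$, follows because every subsequence has a convergent further subsequence. If you wish to keep your direct approach, add the Doob--Gronwall (or supermartingale) maximal estimate sketched above; otherwise the cleanest fix is to delegate the difficulty to the Stroock--Varadhan limit theorem as the paper does.
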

\begin{proof}
 Denote by $\di \left(\tilde Y_\e^n=\left(Y_{[nt]}^n\right)_{\e\le t\le T}\right)_{n\ge 1}$, the sequence of processes. We prove that from any subsequence $\di \left(\tilde Y_\e^{\phi(n)}\right)_{n\ge 1}$, we can extract a further subsequence $\di \left(\tilde Y_\e^{\psi(n)}\right)_{n\ge 1}$ that weakly converges in $\DD([\e,1],\RR^d)$.

Let us first prove that $\di \left(\tilde Y_\e^{\phi(n)}(\e)\right)_{n\ge 1}$ is bounded in $L^2$.
\[
\left\|\tilde Y_\e^{\phi(n)}(\e)\right\|_2^2=\f{[\phi(n)\e]^2}{\phi(n)}\EE\left[\rho^2(e,X_{[\phi(n)\e]})\right]\\
\le \e\sup_{n\ge 1}\left(n\EE\left[\rho^2(e,X_n)\right]\right)
\]
and the last term is bounded by lemma \ref{L3.3}.

Consequently $\di \left(\tilde Y_\e^{\phi(n)}(\e)\right)_{n\ge 1}$ is tight.
  So there is a subsequence $\di \left(\tilde Y_\e^{\psi(n)}(\e)\right)_{n\ge 1}$ that weakly converges in $T_eM$ to the distribution $\nu_\e$.  Thanks to Skorohod theorem which allows to realize it as an a.s. convergence and to lemma~\ref{L3.1} we can apply Theorem~11.2.3 of \cite{Stroock-Varadhan:79}, and we obtain that the sequence of processes $\di \left(\tilde Y_\e^{\psi(n)}\right)_{n\ge 1}$ weakly converges to a diffusion $(y_t)_{\e\le t\le T}$ with generator $G_\d(t)$ given by \eqref{3.26bis} and such that $y_\e$ has law $\nu_\e$. This achieves the proof of lemma~\ref{L3.4}.
\end{proof}

{\it Proof of Theorem \ref{T1}.}\
Let $\di \tilde Y^n=\left(Y_{[nt]}^n\right)_{0\le t\le T}$. It is sufficient to prove that any subsequence of $\di \left(\tilde Y^n\right)_{n\ge 1}$ has a further subsequence which converges in law to $(y_\d(t))_{0\le t\le T}$.
So let $\di\left(\tilde Y^{\phi(n)}\right)_{n\ge 1}$ a subsequence.  By lemma~\ref{L3.4} with $\e=1/m$ there exists a subsequence which converges in law on $[1/m,T]$. Then we extract a sequence indexed by $m$ of subsequence and take the diagonal subsequence   $\di \tilde Y^{\eta(n)}$. This subsequence converges in $\DD((0,T],\RR^d)$ to $(y'(t))_{t\in (0,T]}$. On the other hand, as  in the proof of lemma \ref{L3.4}, we have
$$
\|\tilde Y^{\eta(n)}(t)\|_2^2\le Ct
$$
for some $C>0$. So $\|\tilde Y^{\eta(n)}(t)\|_2^2\to 0$ as $t\to 0$, which in turn implies $\|y'(t)\|_2^2\to 0$ as $t\to 0$. The unicity statement in Proposition \ref{pr2} implies that $(y'(t))_{t\in (0,T]}$ and $(y_\d(t))_{t\in (0,T]}$ are equal in law.  This achieves the proof.
\CQFD

%
%

\providecommand{\bysame}{\leavevmode\hbox to3em{\hrulefill}\thinspace}

\end{document}